\documentclass{amsart}
\usepackage{amssymb,mathtools}
\usepackage[british]{babel}
\usepackage{enumitem}
\usepackage[latin1]{inputenc}
\usepackage{url}

\newtheorem{theorem}{Theorem}
\numberwithin{theorem}{section}
\newtheorem{corollary}[theorem]{Corollary}
\newtheorem{lemma}[theorem]{Lemma}
\newtheorem{proposition}[theorem]{Proposition}
\theoremstyle{definition}
\newtheorem{definition}[theorem]{Definition}
\newtheorem{remark}[theorem]{Remark}

\newcommand{\supp}{\operatorname{supp}}
\newcommand{\rng}{\operatorname{rng}}
\newcommand{\leqf}{\leq^{\operatorname{fin}}}

\newcommand{\T}{\mathcal T}
\newcommand{\po}{\operatorname{PO}}

\newcommand{\nf}{\mathrel{=_{\operatorname{NF}}}}

\newcommand{\ot}{\mathbb{T}}
\newcommand{\rt}{\operatorname{root}}

\title[From Kruskal's theorem to Friedman's gap condition]{From Kruskal's theorem\\ to Friedman's gap condition}
\author{Anton Freund}

\begin{document}

\begin{abstract}
Harvey Friedman's gap condition on embeddings of finite labelled trees plays an important role in combinatorics (proof of the graph minor theorem) and mathematical logic (strong independence results). In the present paper we show that the gap condition can be reconstructed from a small number of well-motivated building blocks: it arises via iterated applications of a uniform Kruskal theorem.
\end{abstract}

\maketitle

\section{Introduction}

In this paper, a tree is a finite partial order $T=(T,\leq_T)$ such that
\begin{itemize}
 \item the order $T$ has a unique minimal element $\langle\rangle$, called the root of~$T$, and
 \item for each $t\in T$, the set $\{s\in T\,|\,s\leq_T t\}$ is linearly ordered by $\leq_T$.
\end{itemize}
For each pair of elements $s,t\in T$ there is a $\leq_T$-maximal element $s\wedge t\in T$ with $s\wedge t\leq_T s$ and $s\wedge t\leq_T t$. An embedding of trees is given by a function $f:S\to T$ that satisfies
\begin{equation*}
 f(s\wedge t)=f(s)\wedge f(t)
\end{equation*}
for all~$s,t\in S$. Since $s\leq_S t$ is equivalent to $s\wedge t=s$, this entails that $f$ is an embedding of partial orders and in particular injective. Kruskal's theorem~\cite{kruskal60} asserts the following: For any infinite sequence $T_0,T_1,\dots$ of finite trees, there are indices $i<j$ such that $T_i$ can be embedded into~$T_j$.

Let us point out that Kruskal's theorem has important implications for theoretical computer science (cf.~the work of N.~Dershowitz~\cite{dershowitz87}) and mathematical logic. Concerning the latter, a classical result of D.~Schmidt~\cite{schmidt-habil} and H.~Friedman~\cite{simpson85} shows that Kruskal's theorem cannot be proved in $\mathbf{ATR_0}$, a relatively strong axiom system that is associated with the predicative foundations of mathematics (see~\cite{gallier-kruskal,simpson09} for detailed explanations). The precise logical strength of Kruskal's theorem has been determined by M.~Rathjen and A.~Weiermann~\cite{rathjen-weiermann-kruskal}.

By an $n$-tree we mean a tree~$T$ together with a function $l:T\to\{0,\dots,n-1\}$. An embedding between $n$-trees $(S,l)$ and $(T,l')$ is given by an embedding $f:S\to T$ of trees that satisfies the following conditions:
\begin{enumerate}[label=(\roman*)]
\item We have $l'(f(s))=l(s)$ for any $s\in S$.
\item If $t$ is an immediate successor of~$r\in S$ (i.\,e.~if $t$ is $\leq_S$-minimal with $r<_S t$) and we have $f(r)<_T s<_T f(t)$, then we have $l'(s)\geq l'(f(t))=l(t)$.
\item If we have $s<_T f(\langle\rangle)$, then we have $l'(s)\geq l'(f(\langle\rangle))=l(\langle\rangle)$.
\end{enumerate}
Part~(ii) and~(iii) constitute the famous gap condition due to H.~Friedman~\cite{simpson85}. More precisely, part~(ii) on its own is known as the weak gap condition. In the present paper we are only concerned with the strong gap condition, which is the conjunction of~(ii) and~(iii). The following result is known as Friedman's theorem: For each number $n$ and any infinite sequence $T_0,T_1,\dots$ of finite $n$-trees, there is an embedding $T_i\to T_j$ of $n$-trees for some indices~$i<j$.

Friedman's theorem plays a role in N.~Robertson and P.~Seymour's proof of their famous graph minor theorem. In fact, Friedman, Robertson and Seymour~\cite{friedman-robertson-seymour} have shown that Friedman's theorem is equivalent to the graph minor theorem for graphs of bounded tree-width, over the weak base theory~$\mathbf{RCA_0}$. From the viewpoint of mathematical logic it is very significant that Friedman's theorem is unprovable in~$\mathbf{\Pi^1_1}\textbf{-}\mathbf{CA_0}$, which is even stronger than the axiom system $\mathbf{ATR_0}$ mentioned above.

The present paper shows that Friedman's gap condition results from iterated applications of a uniform Kruskal theorem. This provides a systematic and transparent reconstruction of the gap condition, which may otherwise feel ad hoc. Furthermore, our reconstruction prepares the computation of maximal order types, as begun by J.~van der Meeren, M.~Rathjen and A.~Weiermann~\cite{meeren-thesis,MRW-Veblen,MRW-Bachmann,MRW-linear}.

Let us explain the uniform Kruskal theorem that was mentioned in the previous paragraph. Given a partial order $X$, we write $W(X)$ for the set of finite multisets with elements from~$X$. Such a multiset can be written as $[x_0,\dots,x_{n-1}]$, where the multiplicity of the entries is relevant but the order is not. To define a partial order on $W(X)$, we declare that $[x_0,\dots,x_{m-1}]\leq_{W(X)}[y_0,\dots,y_{n-1}]$ holds if, and only if, there is an injection $h:\{0,\dots,m-1\}\to\{0,\dots,n-1\}$ such that we have $x_i\leq_X y_{h(i)}$ for all~$i<m$. Write $\T W$ for the set of trees, where isomorphic trees are identified. We get a bijection
\begin{equation*}
\kappa:W(\T W)\to\T W
\end{equation*}
if we define $\kappa([T_0,\dots,T_{n-1}])$ as the tree in which the root has immediate subtrees~$T_0,\dots,T_{n-1}$. Indeed, the set $\T W$ can be charaterized as the initial fixed point of the transformation~$W$. For $S,T\in\T W$ we write $S\leq_{\T W}T$ if there is an embedding $S\to T$ of trees. This relation can also be reconstructed in terms of the order on multisets: Writing $[X]^{<\omega}$ for the set of finite subsets of~$X$, we define a family of functions $\supp^W_X:W(X)\to[X]^{<\omega}$ by setting
\begin{equation*}
\supp^W_X([x_0,\dots,x_{n-1}])=\{x_0,\dots,x_{n-1}\}.
\end{equation*}
For multisets $\sigma$ and $\tau$ in $W(\T W)$ one can verify
\begin{equation}\tag{$\star$}\label{eq:order-TW}
\kappa(\sigma)\leq_{\T W}\kappa(\tau)\,\Leftrightarrow\,(\sigma\leq_{W(\T W)}\tau\text{ or }\kappa(\sigma)\leq_{\T W}T\text{ for some $T\in\supp^W_{\T W}(\tau)$}).
\end{equation}
Indeed, the first disjunct on the right corresponds to an embedding $\kappa(\sigma)\to\kappa(\tau)$ that maps the root to the root, and immediate subtrees to immediate subtrees. The second disjunct corresponds to an embedding that maps all of $\kappa(\sigma)$ into one immediate subtree of~$\kappa(\tau)$.

A PO-dilator is a particularly uniform transformation~$W$ of partial orders that comes with a family of functions $\supp^W_X:W(X)\to[X]^{<\omega}$. In Section~\ref{sect:relativized-kruskal} we will recall the precise definition, as well as a normality condition for PO-dilators. For any normal PO-dilator~$W$ one can construct a ``Kruskal fixed point" $\T W$ that is partially ordered according to~(\ref{eq:order-TW}). Recall that a partial order~$X$ is a well partial order if any infinite sequences $x_0,x_1,\dots$ in $X$ admits indices $i<j$ with $x_i\leq_X x_j$. A PO-dilator~$W$ is called a WPO-dilator if $W(X)$ is a well partial order whenever the same holds for~$X$. The uniform Kruskal theorem asserts that $\T W$ is a well partial order for any normal WPO-dilator~$W$. In the previous paragraph we have see that the usual Kruskal theorem arises as a special case. It is instructive to check that Higman's lemma is another special case (take $W(X)=1+Z\times X$ to generate finite lists with entries in~$Z$). As shown by A.~Freund, M.~Rathjen and A.~Weiermann~\cite{frw-kruskal}, the uniform Kruskal theorem is equivalent to $\Pi^1_1$-comprehension (the main axiom of~$\mathbf{\Pi^1_1}\textbf{-}\mathbf{CA_0}$), over $\mathbf{RCA_0}$ together with the chain antichain principle. This result builds on a corresponding equivalence in the context of linear orders, which is due to the present author~\cite{freund-equivalence,freund-categorical,freund-computable}.

In this paper we show how the construction of $\T W$ can be relativized to a given partial order~$X$. The result is a partial order $\T W(X)$ with a bijection
\begin{equation*}
X\sqcup W(\T W(X))\to\T W(X).
\end{equation*}
The point of the relativization is that $\T W$ becomes a transformation of partial orders. We will show that $\T W$ can itself be equipped with the structure of a normal PO-dilator, which we call the Kruskal derivative of~$W$. The axiom of~$\Pi^1_1$-comprehension is still equivalent to the principle that $\T W$ is a normal WPO-dilator whenever the same holds for~$W$. This principle will also be referred to as the uniform Kruskal theorem. 

Our main aim is to reconstruct Friedman's gap condition by taking iterated Kruskal derivatives. In the following we write $M(X)$ for the set of multisets with elements from~$X$. One can equip $M$ with the structure of a normal WPO-dilator. Our reconstruction of Friedman's gap condition proceeds via the following steps:
\begin{enumerate}
\item Start with the normal WPO-dilator~$\ot_0$ given by $\ot_0(X)=X$.
\item Assuming that the normal WPO-dilator $\ot_n$ is already constructed, define the normal WPO-dilator $\ot_{n+1}^-$ as the Kruskal derivative of~$M\circ \ot_n$.
\item Define the normal WPO-dilator $\ot_{n+1}$ as the composition $\ot_n\circ \ot_{n+1}^-$.
\item Verify that $\ot_n(\emptyset)$ is isomorphic to the set of $n$-trees, ordered according to Friedman's strong gap condition.
\end{enumerate}
In particular, the statement that $\ot_n$ is a WPO-dilator follows from $n$ applications of the uniform Kruskal theorem. If $\Pi^1_2$-induction is available, then one can conclude that the statement holds for all $n\in\mathbb N$. This helps to explain why~$\mathbf{\Pi^1_1}\textbf{-}\mathbf{CA_0}$ does not prove that~$\ot_n(\emptyset)$ is a well partial order for every~$n\in\mathbb N$, even though it proves the statement for each fixed number.

To conclude this introduction, we discuss related results from the literature. The original proof of Friedman's theorem~\cite{simpson85} involves iterated applications of the minimal bad sequence argument, which broadly resemble steps~(2) and~(3) above. It does not, however, translate these iterations into a recursive definition of the gap condition. Instead, it seems that the latter was originally motivated by certain ordinal notation systems. Our transformation of~$W$ into $\T W$ is very similar to a construction by~R.~Hasegawa~\cite{hasegawa97}. Without giving a detailed proof, Hasegawa even states that iterations of the construction lead to a variant of Friedman's gap condition for trees with edge labels. Van der Meeren, Rathjen and Weiermann have reconstructed suborders of the trees with gap condition, with certain restrictions on the distribution of labels (see e.\,g.~\cite[Definition~16]{MRW-Bachmann} and~\cite[Definition~12]{MRW-linear}). As far as we know, the present paper is the first to give a detailed reconstruction of the gap condition in its original form.

\textbf{Acknowledgements.} I am very grateful to Jeroen van der Meeren, Michael Rathjen and Andreas Weiermann. I owe them many of the ideas that were fundamental for the present paper.

\section{Relativized Kruskal fixed points}\label{sect:relativized-kruskal}

In this section we recall the definition of normal PO-dilator. We then construct the relativized Kruskal fixed points $\T W(X)$ that were mentioned in the introduction. We will introduce these fixed points in terms of notation systems. A more semantic characterization will follow in the next section.

Jean-Yves Girard~\cite{girard-pi2} has introduced dilators as particularly uniform transformations of linear orders. A corresponding definition for partial orders has been given by Freund, Rathjen and Weiermann~\cite{frw-kruskal}. In order to recall the precise definition, we need some terminology: A function $f:X\to Y$ between partial orders is called a quasi embedding if $f(x)\leq_Y f(y)$ implies $x\leq_X y$. If the converse implication holds as well, then we have an embedding. The category~$\po$ consists of the partial orders as objects and the quasi embeddings as morphisms. We say that a functor $W:\po\to\po$ preserves embeddings if $W(f):W(X)\to W(Y)$ is an embedding whenever the same holds for~$f:X\to Y$. As in the introduction, we write $[X]^{<\omega}$ for the set of finite subsets of a given set~$X$. To turn $[\cdot]^{<\omega}$ into a functor, we define
\begin{equation*}
 [f]^{<\omega}(a)=\{f(x)\,|\,x\in a\}\in[Y]^{<\omega}\qquad\text{for $f:X\to Y$ and $a\in[X]^{<\omega}$}.
\end{equation*}
We also apply $[\cdot]^{<\omega}$ to partial orders, omitting the forgetful functor to the underlying set. Conversely, subsets of partial orders are often considered as suborders.

\begin{definition}\label{def:po-dilator}
 A PO-dilator consists of
 \begin{enumerate}[label=(\roman*)]
  \item a functor $W:\po\to\po$ that preserves embeddings and
  \item a natural transformation $\supp^W:W\Rightarrow[\cdot]^{<\omega}$ that satisfies the following support condition: Given any embedding $f:X\to Y$ of partial orders, the embedding $W(f):W(X)\to W(Y)$ has range
  \begin{equation*}
   \rng(W(f))=\{\sigma\in W(Y)\,|\,\supp^W_Y(\sigma)\subseteq\rng(f)\}.
  \end{equation*}
 \end{enumerate}
 If $W(X)$ is a well partial order (wpo) for any wpo~$X$, then~$W$ is a WPO-dilator.
\end{definition}

The reader may have observed that the previous definition focuses on embeddings rather than quasi embeddings. The latter are important for applications to the theory of well partial orders (see e.\,g.~\cite{frw-kruskal}). Also, the inclusion $\subseteq$ in part~(ii) of the definition is automatic, since the naturality of supports yields
\begin{equation*}
 \supp^W_Y(W(f)(\sigma_0))=[f]^{<\omega}(\supp^W_X(\sigma_0))\subseteq\rng(f).
\end{equation*}
When the partial order~$X$ is clear from the context, then $\iota_a:a\hookrightarrow X$ denotes the inclusion of a suborder~$a\subseteq X$. For $\sigma\in W(X)$ we write
\begin{equation*}
 \sigma\nf W(\iota_a)(\sigma_0)\qquad\text{with $a\in[X]^{<\omega}$ and $\sigma_0\in W(a)$}
\end{equation*}
if the equality holds and we have $\supp^W_a(\sigma_0)=a$. The latter is a uniqueness condition, which is required for the following result:

\begin{lemma}\label{lem:normal-form}
 Consider a PO-dilator~$W$ and a partial order~$X$. Any $\sigma\in W(X)$ has a unique normal form $\sigma\nf W(\iota_a)(\sigma_0)$. For the latter we have $a=\supp^W_X(\sigma)$.
\end{lemma}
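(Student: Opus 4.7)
The plan is to prove existence and uniqueness separately, in both cases extracting everything from the two axioms in Definition~\ref{def:po-dilator}: the range description of $W(f)$ (for existence) and the naturality of supports together with injectivity of $W(f)$ on embeddings (for uniqueness).

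For existence, I would set $a:=\supp^W_X(\sigma)\in[X]^{<\omega}$ and consider the inclusion $\iota_a:a\hookrightarrow X$. Since $\supp^W_X(\sigma)=a=\rng(\iota_a)$, the support clause~(ii) in Definition~\ref{def:po-dilator} gives a (unique, as we will see) $\sigma_0\in W(a)$ with $W(\iota_a)(\sigma_0)=\sigma$. It then remains to verify the uniqueness condition $\supp^W_a(\sigma_0)=a$; this comes from naturality of $\supp^W$, which yields
\begin{equation*}
[\iota_a]^{<\omega}(\supp^W_a(\sigma_0))=\supp^W_X(W(\iota_a)(\sigma_0))=\supp^W_X(\sigma)=a,
\end{equation*}
together with the fact that $[\iota_a]^{<\omega}$ is just inclusion on subsets of $a$, so $\supp^W_a(\sigma_0)\subseteq a$ must in fact equal~$a$. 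This simultaneously proves that the $a$ occurring in any normal form must be $\supp^W_X(\sigma)$.

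For uniqueness, suppose $\sigma\nf W(\iota_b)(\tau_0)$ is another normal form with $b\in[X]^{<\omega}$ and $\tau_0\in W(b)$ satisfying $\supp^W_b(\tau_0)=b$. The same naturality calculation as above gives
\begin{equation*}
b=[\iota_b]^{<\omega}(\supp^W_b(\tau_0))=\supp^W_X(W(\iota_b)(\tau_0))=\supp^W_X(\sigma)=a,
\end{equation*}
so the finite set $b$ coincides with $a$. To conclude $\tau_0=\sigma_0$ I would invoke clause~(i) of Definition~\ref{def:po-dilator}: since $\iota_a$ is an embedding and $W$ preserves embeddings, $W(\iota_a):W(a)\to W(X)$ is an embedding and in particular injective, so $W(\iota_a)(\sigma_0)=\sigma=W(\iota_a)(\tau_0)$ forces $\sigma_0=\tau_0$.

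There is no substantial obstacle; the main thing to keep straight is the distinction between the two roles played by the support condition (producing the preimage $\sigma_0$ in the existence step, and pinning down $a$ in the uniqueness step), together with the observation that embedding-preservation is what converts the surjective description of $\rng(W(\iota_a))$ into an actual bijection between $\{\tau_0\in W(a)\mid \supp^W_a(\tau_0)=a\}$ and the elements of $W(X)$ whose support equals~$a$.
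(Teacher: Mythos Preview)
Your proof is correct and follows essentially the same approach as the paper: both arguments use the support condition to produce $\sigma_0$ over $a=\supp^W_X(\sigma)$, naturality of $\supp^W$ to verify $\supp^W_a(\sigma_0)=a$ and to pin down $a$ in any normal form, and injectivity of $W(\iota_a)$ to determine~$\sigma_0$. The only difference is that the paper treats uniqueness first and existence second, which is immaterial.
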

\begin{proof}
 Let us first show that the representation is unique. Since $\supp^W$ is natural, we can observe that $\sigma\nf W(\iota_a)(\sigma_0)$ entails $\supp^W_a(\sigma_0)=a$ and hence
 \begin{equation*}
  \supp^W_X(\sigma)=\supp^W_X(W(\iota_a)(\sigma_0))=[\iota_a]^{<\omega}(\supp^W_a(\sigma_0))=[\iota_a]^{<\omega}(a)=a.
 \end{equation*}
 This means that $a$ is determined by~$\sigma$. Just as any embedding, the function $W(\iota_a)$ is injective. Hence $\sigma_0$ is uniquely determined as well. In order to prove existence, we put $a=\supp^W_X(\sigma)$. Then we have $\supp^W_X(\sigma)\subseteq a=\rng(\iota_a)$, so that the support condition yields $\sigma=W(\iota_a)(\sigma_0)$ for some $\sigma_0\in W(a)$. We also have
 \begin{equation*}
  a=\supp^W_X(\sigma)=[\iota_a]^{<\omega}(\supp^W_a(\sigma_0)).
 \end{equation*}
 This implies $\supp^W_a(\sigma_0)=a$ and hence $\sigma\nf W(\iota_a)(\sigma_0)$.
\end{proof}

The normal forms from the previous lemma can be used to represent PO-dilators in second order arithmetic, as worked out in~\cite{frw-kruskal}. In the present paper we do not work within a particular meta theory. Given a partial order~$X$, we define a quasi order $\leqf_X$ on the set $[X]^{<\omega}$ by stipulating
\begin{equation*}
 a\leqf_X b\quad\Leftrightarrow\quad\text{for any $x\in a$ there is a $y\in b$ with $x\leq_X y$}.
\end{equation*}
We will write $a\leqf_X y$ rather than $a\leqf_X\{y\}$ in the case of a singleton. The following normality condition turns out to be crucial:

\begin{definition}\label{def:normal}
 A PO-dilator $W$ is called normal if we have
 \begin{equation*}
  \sigma\leq_{W(X)}\tau\quad\Rightarrow\quad\supp^W_X(\sigma)\leqf_X\supp^W_X(\tau),
 \end{equation*}
 for any partial order~$X$ and arbitrary elements~$\sigma,\tau\in W(X)$.
\end{definition}

In many applications, the elements $\sigma,\tau\in W(X)$ are finite structures with labels in~$X$. Then the inequality $\supp^W_X(\sigma)\leqf_X\supp^W_X(\tau)$ corresponds to the condition that each label is mapped to a bigger one. In~\cite{frw-kruskal}, the Kruskal fixed point $\T W$ of a normal PO-dilator has been generated by the following inductive clause:
\begin{itemize}
 \item Assuming that we have already generated a finite suborder $a\subseteq\T W$, we add a term $\circ(a,\sigma)\in\T W$ for each element $\sigma\in W(a)$ with $\supp^W_a(\sigma)=a$.
\end{itemize}
The point is that one can now define a bijection~$\kappa:W(\T W)\to\T W$ by stipulating $\kappa(\sigma)=\circ(a,\sigma_0)$ for $\sigma\nf W(\iota_a)(\sigma_0)$. We will relativize the construction by including constant symbols $\overline x\in\T W(X)$ for elements $x\in X$ of a given partial order. At various places in the following definition, we require that $\leq_{\T W(X)}$ is a partial order on certain subsets of~$\T W(X)$. We will later show that all of $\T W(X)$ is partially ordered by $\leq_{\T W(X)}$, so that these requirements become redundant. A~more detailed justification of the following recursion can be found below.

\begin{definition}\label{def:TW(X)}
 Consider a normal PO-dilator~$W$. For each partial order~$X$ we define a set $\T W(X)$ of terms and a binary relation $\leq_{\T W(X)}$ on this set by simultaneous recursion. The set $\T W(X)$ is generated by the following clauses:
 \begin{enumerate}[label=(\roman*)]
  \item For each element $x\in X$ we have a term $\overline x\in\T W(X)$.
  \item Given a finite set $a\subseteq\T W(X)$ that is partially ordered by $\leq_{\T W(X)}$, we add a term $\circ(a,\sigma)\in\T W(X)$ for each $\sigma\in W(a)$ with $\supp^W_a(\sigma)=a$.
 \end{enumerate}
 For $s,t\in\T W(X)$ we stipulate that $s\leq_{\T W(X)}t$ holds if, and only if, one of the following clauses applies:
 \begin{enumerate}[label=(\roman*')]
  \item We have $s=\overline x$ and $t=\overline y$ with $x\leq_X y$.
  \item We have $t=\circ(b,\tau)$ and $s\leq_{\T W(X)}t'$ for some $t'\in b$ (where $s$ can be of the form $\overline x$ or $\circ(a,\sigma)$).
  \item We have $s=\circ(a,\sigma)$ and $t=\circ(b,\tau)$, the restriction of $\leq_{\T W(X)}$ to $a\cup b$ is a partial order, and we have
  \begin{equation*}
   W(\iota_a)(\sigma)\leq_{W(a\cup b)} W(\iota_b)(\tau),
  \end{equation*}
  where $\iota_a:a\hookrightarrow a\cup b$ and $\iota_b:b\hookrightarrow a\cup b$ are the inclusions.
 \end{enumerate}
\end{definition}

To justify the recursion in detail, one can argue as follows: First generate a set $\T_0W(X)\supseteq\T W(X)$ by including all terms $\circ(a,\sigma)$ for finite $a\subseteq\T_0W(X)$, where $a$ is not assumed to be ordered and $\sigma\in W(a)$ holds with respect to some partial order on~$a$. Then define a length function $l_X:\T_0W(X)\to\mathbb N$ by the recursive clauses
\begin{equation*}
 l_X(\overline x)=0,\qquad l_X(\circ(a,\sigma))=1+\textstyle\sum_{r\in a}2\cdot l_X(r).
\end{equation*}
One can now decide $r\in\T W(X)$ and $s\leq_{\T W(X)}t$ by simultaneous recursion on~$l_X(r)$ and $l_X(s)+l_X(t)$. As an example, we consider the case of $r=\circ(a,\sigma)$. For $s,t\in a$ we have $l_X(s)+l_X(t)<l_X(r)$, even when $s$ and $t$ are the same term (due to the factor~$2$ above). Recursively, we can thus determine the restriction of $\leq_{\T W(X)}$ to~$a$. If the latter is a partial order, we check whether $\sigma\in W(a)$ and $\supp^W_a(\sigma)=a$ hold with respect to this order. When this is the case, we have $r\in\T W(X)$. In addition to the length functions, we need the height functions $h_X:\T W(X)\to\mathbb N$ given by
\begin{equation*}
 h_X(\overline x)=0,\qquad h_X(\circ(a,\sigma))=\max(\{0\}\cup\{h_X(r)+1\,|\,r\in a\}).
\end{equation*}
When there us no danger of confusion, we sometimes omit the index~$X$. The following important observation relies on the assumption that $W$ is normal. It confirms the intuition that~$\T W(X)$ can be seen as a tree-like structure.

\begin{lemma}\label{lem:ineq-heights}
 Consider a normal PO-dilator~$W$ and a partial order~$X$. For any elements $s,t\in\T W(X)$, the inequality $s\leq_{\T W(X)}t$ implies $h_X(s)\leq h_X(t)$.
\end{lemma}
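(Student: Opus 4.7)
The proof proceeds by induction. I would induct on the length $l_X(s) + l_X(t)$, and split into the three cases (i'), (ii'), (iii') from Definition~\ref{def:TW(X)}.

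In case (i'), both $s$ and $t$ are constants $\overline x$ and $\overline y$, so $h_X(s) = h_X(t) = 0$ and there is nothing to do. In case (ii'), we have $t = \circ(b,\tau)$ and $s \leq_{\T W(X)} t'$ for some $t' \in b$; since $l_X(s) + l_X(t') < l_X(s) + l_X(t)$, the induction hypothesis gives $h_X(s) \leq h_X(t')$, and by the definition of $h_X$ we have $h_X(t') + 1 \leq h_X(t)$, so $h_X(s) < h_X(t)$.

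The interesting case is (iii'), where $s = \circ(a,\sigma)$, $t = \circ(b,\tau)$, the restriction of $\leq_{\T W(X)}$ to $a \cup b$ is a partial order, and
\begin{equation*}
  W(\iota_a)(\sigma) \leq_{W(a \cup b)} W(\iota_b)(\tau).
\end{equation*}
Here I would invoke the normality of $W$ applied to the partial order $a \cup b$: this yields
\begin{equation*}
 \supp^W_{a \cup b}(W(\iota_a)(\sigma)) \leqf_{a \cup b} \supp^W_{a \cup b}(W(\iota_b)(\tau)).
\end{equation*}
Naturality of $\supp^W$ together with $\supp^W_a(\sigma) = a$ and $\supp^W_b(\tau) = b$ (from Lemma~\ref{lem:normal-form} or the generation clause for $\T W(X)$) reduces the left and right sides to $a$ and $b$ (viewed inside $a \cup b$). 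Hence $a \leqf_{a \cup b} b$, i.e.~for each $r \in a$ there exists $r' \in b$ with $r \leq_{\T W(X)} r'$. Since $l_X(r) + l_X(r') < l_X(s) + l_X(t)$, the induction hypothesis gives $h_X(r) \leq h_X(r')$, and therefore
\begin{equation*}
 h_X(s) = \max(\{0\} \cup \{h_X(r)+1 \mid r \in a\}) \leq \max(\{0\} \cup \{h_X(r')+1 \mid r' \in b\}) = h_X(t).
\end{equation*}

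The only subtle point I expect to be worth spelling out is the appeal to normality in case (iii'): one needs to apply Definition~\ref{def:normal} on the ambient partial order $a \cup b$ (whose restriction of $\leq_{\T W(X)}$ really is a partial order, as given by the hypothesis of clause (iii')), and then identify the supports with $a$ and $b$ via naturality. The other cases are essentially bookkeeping.
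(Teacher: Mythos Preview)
Your proof is correct and follows essentially the same approach as the paper: induction on $l_X(s)+l_X(t)$, with the key step in case~(iii') being the use of normality on the partial order $a\cup b$ together with naturality of $\supp^W$ to conclude $a\leqf b$. The paper's write-up is slightly terser but carries out exactly the same computation.
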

\begin{proof}
 One argues by induction on~$l(s)+l(t)$. The case of~$s=\overline x$ and~$t=\overline y$ is immediate. The remaining cases are similar to the proof of~\cite[Lemma~3.5]{frw-kruskal}. First assume that
 \begin{equation*}
  s\leq_{\T W(X)}\circ(b,\tau)=t
 \end{equation*}
 holds because we have $s\leq_{\T W(X)}t'$ for some $t'\in b$. In view of $l(t')<l(t)$ the induction hypothesis yields $h(s)\leq h(t')<h(t)$. Now assume that
 \begin{equation*}
  s=\circ(a,\sigma)\leq_{\T W(X)}\circ(b,\tau)=t
 \end{equation*}
 holds because of~$W(\iota_a)(\sigma)\leq_{W(a\cup b)} W(\iota_b)(\tau)$. Since $W$ is normal, we get
 \begin{equation*}
  a=[\iota_a]^{<\omega}(\supp^W_a(\sigma))=\supp^W_{a\cup b}(W(\iota_a)(\sigma))\leqf_{\T W(X)}\supp^W_{a\cup b}(W(\iota_b)(\tau))=b.
 \end{equation*}
 Given any $s'\in a$ we thus have $s'\leq_{\T W(X)}t'$ for some $t'\in b$. By induction hypothesis we obtain $h(s')\leq h(t')<h(t)$. As $s'\in a$ was arbitrary, this yields $h(s)\leq h(t)$.
\end{proof}

The proof of the following result is similar to the one of~\cite[Proposition~3.6]{frw-kruskal}. Since the present notation is somewhat different, we reproduce the proof for the reader's convenience.

\begin{proposition}\label{prop:fixed-points-partial-order}
 The relation $\leq_{\T W(X)}$ is a partial order on $\T W(X)$, for any normal PO-dilator~$W$ and any partial order~$X$.
\end{proposition}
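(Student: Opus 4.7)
All three properties (reflexivity, antisymmetry, transitivity) are proved simultaneously by induction on the length function $l_X:\T W(X)\to\mathbb{N}$ introduced in the excerpt. More precisely, I would establish by induction on $n\in\mathbb{N}$ that the restriction of $\leq_{\T W(X)}$ to the set $\{r\in\T W(X)\,|\,l_X(r)\leq n\}$ is a partial order. The virtue of this formulation is that whenever we apply clause (iii') to some $s=\circ(a,\sigma)$ and $t=\circ(b,\tau)$, all elements of $a\cup b$ have strictly smaller length than $\max(l_X(s),l_X(t))$, so the hypothesis ``$\leq_{\T W(X)}$ restricts to a partial order on $a\cup b$'' is automatically available from the inductive hypothesis; hence $W(a\cup b)$ is a partial order by functoriality of $W$ on $\po$, and we can freely invoke the axioms there.

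\textbf{Reflexivity and transitivity.} For reflexivity, the case $s=\overline{x}$ uses clause (i') with $x\leq_X x$; the case $s=\circ(a,\sigma)$ uses (iii') with $a=b$, since $W(\iota_a)(\sigma)\leq_{W(a)}W(\iota_a)(\sigma)$ holds by reflexivity in the partial order $W(a)$ (available by IH on $a$). For transitivity, suppose $r\leq s$ and $s\leq t$. A case analysis on the outermost defining clause for each inequality reduces everything to three kinds of inductive step: (a) both $r$ and $t$ are constants, handled by transitivity in $X$; (b) the witness coming from clause (ii') for $s\leq t$ (i.e.~$s\leq t'$ for some $t'\in b$) combines with $r\leq s$ via IH (lengths drop) to yield $r\leq t'$, hence $r\leq t$ again by (ii'); (c) both inequalities use (iii') with $s=\circ(a,\sigma)$, $t=\circ(b,\tau)$, $r=\circ(c,\rho)$, in which case we push everything into $W(a\cup b\cup c)$ through the relevant inclusions and use transitivity there, using that $W$ preserves embeddings to move between the $W(a\cup b)$, $W(b\cup c)$ and $W(a\cup b\cup c)$ orderings.

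\textbf{Antisymmetry and the main obstacle.} The delicate point is antisymmetry, and Lemma 2.5 is the tool that tames it. If $s\leq t$ and $t\leq s$, then $h(s)\leq h(t)\leq h(s)$. A direct inspection of clauses (i')--(iii') shows that a constant $\overline{x}$ cannot have a compound term $\leq_{\T W(X)}$ beneath it, ruling out the mixed cases; when both are constants, antisymmetry in $X$ finishes. When both are compound, the equality of heights rules out clause (ii') in either direction (it would strictly decrease the height, by the argument in the proof of Lemma 2.5), so both inequalities come from (iii'). Writing $s=\circ(a,\sigma)$ and $t=\circ(b,\tau)$, the IH tells us $\leq_{\T W(X)}$ is a partial order on $a\cup b$, hence $W(a\cup b)$ is one as well, so antisymmetry there gives $W(\iota_a)(\sigma)=W(\iota_b)(\tau)$; taking supports yields $a=\supp^W_{a\cup b}(W(\iota_a)(\sigma))=\supp^W_{a\cup b}(W(\iota_b)(\tau))=b$, and then injectivity of $W(\iota_a)$ forces $\sigma=\tau$, so $s=t$. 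The main obstacle is exactly this antisymmetry step: without the height control of Lemma 2.5 one cannot exclude the possibility that, say, $\circ(a,\sigma)\leq t'$ for some $t'\in b$ together with $\circ(b,\tau)\leq s'$ for some $s'\in a$, which would create a circular dependency that the inductive hypothesis on lengths could not directly break. Normality of $W$ enters precisely to make Lemma 2.5 available.
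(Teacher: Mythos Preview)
Your approach is essentially the paper's, but there is a genuine gap in the transitivity argument. You enumerate three cases, yet you omit the case where $s\leq_{\T W(X)} t$ holds via clause~(iii') while $r\leq_{\T W(X)} s$ holds via clause~(ii'): say $s=\circ(a,\sigma)$, $t=\circ(b,\tau)$, and $r\leq_{\T W(X)} s'$ for some $s'\in a$. Nothing in your list covers this, and it is precisely here that normality of $W$ is used \emph{directly} rather than through Lemma~2.5: from $W(\iota_a)(\sigma)\leq_{W(a\cup b)}W(\iota_b)(\tau)$ normality gives $a\leqf_{\T W(X)} b$, so one picks $t'\in b$ with $s'\leq_{\T W(X)} t'$, obtains $r\leq_{\T W(X)} t'$ by the inductive hypothesis, and concludes $r\leq_{\T W(X)} t$ via~(ii'). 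Your closing sentence ``Normality of $W$ enters precisely to make Lemma~2.5 available'' is therefore misleading.

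A second, smaller issue: your induction scheme (``the restriction of $\leq_{\T W(X)}$ to $\{r:l_X(r)\leq n\}$ is a partial order'') does not support your step~(b). There you have $r\leq s\leq t'$ with $t'\in b$ and want $r\leq t'$ ``via IH (lengths drop)'', but only $l(t')<l(t)$ drops; $l(r)$ and $l(s)$ may still equal~$n$, so the triple $(r,s,t')$ lies in the very set whose transitivity you are proving. The paper avoids this by inducting on $l(r)+l(s)+l(t)$ (and $l(s)+l(t)$ for antisymmetry), which does decrease in all recursive calls.
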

\begin{proof}
 One uses simultaneous induction on~$n$ to establish
 \begin{align*}
  r&\leq_{\T W(X)}r && \text{for $l(r)\leq n$},\\
  (s\leq_{\T W(X)}t\,\land\, t&\leq_{\T W(X)}s)\,\Rightarrow\,s=t && \text{for $l(s)+l(t)\leq n$},\\
  (r\leq_{\T W(X)}s\,\land\, s&\leq_{\T W(X)} t)\,\Rightarrow\,r\leq_{\T W(X)}t && \text{for $l(r)+l(s)+l(t)\leq n$}.
 \end{align*}
 Reflexivity is readily verified. Concerning antisymmetry, we consider the case where $s\leq_{\T W(X)}\circ(b,\tau)=t$ holds because we have $s\leq_{\T W(X)}t'$ for some $t'\in b$. By the previous lemma we get $h(s)\leq h(t')<h(t)$, which makes $t\leq_{\T W(X)}s$ impossible. Still for antisymmetry, we also consider the case where $s\leq_{\T W(X)}t\leq_{\T W(X)}s$ with $s=\circ(a,\sigma)$ and $t=\circ(b,\tau)$ holds because of $W(\iota_a)(\sigma)=W(\iota_b)(\tau)$. Similarly to the proof of Lemma~\ref{lem:normal-form}, we get
 \begin{equation*}
  a=[\iota_a]^{<\omega}(\supp^W_a(\sigma))=\supp^W_{a\cup b}(W(\iota_a)(\sigma))=\supp^W_{a\cup b}(W(\iota_b)(\tau))=b.
 \end{equation*}
 Since $W(\iota_a)=W(\iota_b)$ is injective, it follows that $W(\iota_a)(\sigma)=W(\iota_b)(\tau)$ implies $\sigma=\tau$ and hence $s=t$. For transitivity we consider $t=\circ(c,\rho)$. If $r\leq_{\T W(X)}s\leq_{\T W(X)}t$ holds because we have $s\leq_{\T W(X)}t'$ for some $t'\in c$, then the induction hypothesis yields $r\leq_{\T W(X)}t'$ and hence $r\leq_{\T W(X)}t$. Now assume that $s=\circ(b,\tau)\leq_{\T W(X)}t$ holds due to
 \begin{equation*}
  W(\iota_b)(\tau)\leq_{W(b\cup c)} W(\iota_c)(\rho).
 \end{equation*}
 Since~$W$ is normal this implies $b\leqf_{\T W(X)}c$, as in the proof of the previous lemma. So if $r\leq_{\T W(X)}s$ holds because we have $r\leq_{\T W(X)}s'$ for some $s'\in b$, then we get
 \begin{equation*}
  r\leq_{\T W(X)}s'\leq_{\T W(X)}t'\qquad\text{for some $t'\in c$}.
 \end{equation*}
 By induction hypothesis this yields $r\leq_{\T W(X)}t'$, which implies $r\leq_{\T W(X)}t$. Finally, assume that $r=\circ(a,\sigma)\leq_{\T W(X)}s$ holds because we have
 \begin{equation*}
  W(\iota_a)(\sigma)\leq_{W(a\cup b)} W(\iota_b)(\tau).
 \end{equation*}
 To conclude $r\leq_{\T W(X)} t$ it suffices to consider the inclusions into the set~$a\cup b\cup c$, which is partially ordered due to the simultaneous induction hypothesis.
\end{proof}

For a suitable formalization of PO-dilators in second order arithmetic, the following has been shown by Freund, Rathjen and Weiermann~\cite{frw-kruskal}: The principle that $\T W(\emptyset)$ is a well partial order for any normal WPO-dilator~$W$ is equivalent to \mbox{$\Pi^1_1$-comprehension}, over $\mathbf{RCA_0}$ together with the chain antichain principle. A fortiori, $\Pi^1_1$-comprehension does also follow from the principle that $\T W(X)$ is a well partial order whenever the same holds for~$X$. The following result shows that the converse implication remains true as well, since the minimal bad sequence argument in its proof can be justified by~$\Pi^1_1$-comprehension. Even though the proof is similar to the one of~\cite[Theorem~3.10]{frw-kruskal}, we provide it for the reader's convenience.

\begin{proposition}\label{prop:derivative-preserves-wpos}
 Consider a normal WPO-dilator $W$. If $X$ is a well partial order, then so is $\T W(X)$.
\end{proposition}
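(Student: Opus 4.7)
The plan is to adapt the classical Nash--Williams minimal bad sequence argument to the relativized setting, closely following the blueprint of~\cite[Theorem~3.10]{frw-kruskal}. I will assume toward a contradiction that $\T W(X)$ is not a wpo, and invoke $\Pi^1_1$-comprehension to pick a bad sequence $(t_n)_{n\in\mathbb N}$ that is minimal in the sense that $h_X(t_n)$ is chosen as small as possible given the already-fixed initial segment $t_0,\dots,t_{n-1}$. A first case split handles the constants: if infinitely many $t_n$ have the form $\overline{x_n}$, the subsequence along those indices is still bad, and clause~(i') of Definition~\ref{def:TW(X)} transports its badness to a bad sequence in~$X$, contradicting the hypothesis on~$X$. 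After thinning, I may therefore assume $t_n=\circ(a_n,\sigma_n)$ for every~$n$, and I set $A=\bigcup_n a_n\subseteq\T W(X)$.

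The key step will be to establish that $A$, with the order inherited from~$\T W(X)$, is itself a wpo. Suppose $(s_k)$ were a bad sequence in~$A$ with $s_k\in a_{n_k}$. Finiteness of the individual $a_n$ lets me thin $(s_k)$ so that the indices $n_k$ are strictly increasing. The spliced sequence
\begin{equation*}
 t_0,t_1,\dots,t_{n_0-1},s_0,s_1,s_2,\dots
\end{equation*}
is then still bad: neither tail admits a good pair, and any cross comparison $t_i\leq_{\T W(X)}s_k$ with $i<n_0$ would, via $s_k\in a_{n_k}$ and clause~(ii'), yield $t_i\leq_{\T W(X)}t_{n_k}$, contradicting badness of~$(t_n)$. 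Since $s_0\in a_{n_0}$, the definition of $h_X$ forces $h_X(s_0)<h_X(t_{n_0})$, which contradicts the minimal choice of~$t_{n_0}$. Hence no such $(s_k)$ can exist, and $A$ is a wpo.

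From here the conclusion follows directly from the WPO-dilator hypothesis on~$W$. Since $A$ is a wpo, so is $W(A)$, and the sequence $\bigl(W(\iota_{a_n}^A)(\sigma_n)\bigr)_n$ must admit indices $i<j$ with $W(\iota_{a_i}^A)(\sigma_i)\leq_{W(A)}W(\iota_{a_j}^A)(\sigma_j)$, where $\iota_{a_n}^A\colon a_n\hookrightarrow A$ denotes the inclusion. Setting $B=a_i\cup a_j$ and using functoriality, each side factors as $W(B\hookrightarrow A)\circ W(\iota_{a_n}^B)(\sigma_n)$; since $W$ preserves embeddings, $W(B\hookrightarrow A)$ is itself an embedding, so the inequality pulls back to $W(\iota_{a_i}^B)(\sigma_i)\leq_{W(B)}W(\iota_{a_j}^B)(\sigma_j)$, and clause~(iii') of Definition~\ref{def:TW(X)} yields $t_i\leq_{\T W(X)}t_j$, the desired contradiction. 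I expect the splicing step that establishes $A$ as a wpo to be the main obstacle: one must verify simultaneously that the thinning to strictly increasing $n_k$ goes through (using only finiteness of each $a_n$) and that every mixed comparison in the spliced sequence is refutable. Normality of~$W$ enters only indirectly here, through Lemma~\ref{lem:ineq-heights} and Proposition~\ref{prop:fixed-points-partial-order}, which together guarantee that the restriction of~$\leq_{\T W(X)}$ to the finite set~$A$ is a well-defined partial order.
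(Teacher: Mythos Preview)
Your overall strategy is the same as the paper's, and the pieces are all present, but there is a genuine order-of-operations problem in the splicing step. You thin the minimal bad sequence to remove the constant terms \emph{before} proving that $A$ is a wpo. Once you pass to a subsequence and re-index, minimality is lost: the defining property---that any sequence $t_0,\dots,t_{i-1},t_i',t_{i+1}',\dots$ with $h_X(t_i')<h_X(t_i)$ is good---refers to initial segments of the \emph{original} sequence. Your spliced sequence $t_0,\dots,t_{n_0-1},s_0,s_1,\dots$ uses an initial segment of the thinned sequence, so the inequality $h_X(s_0)<h_X(t_{n_0})$ no longer contradicts any minimal choice.

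The fix is to postpone the thinning, exactly as the paper does. Keep the original minimal bad sequence, set $a_n=\emptyset$ whenever $t_n=\overline{x}$, and prove that $Z=\bigcup_n a_n$ is a wpo by splicing against the original initial segment $t_0,\dots,t_{j(0)-1}$; minimality then applies directly. Only \emph{after} $Z$ is known to be a wpo do you pass to the subsequence consisting of the $\circ$-terms, noting (as the paper does explicitly) that this subsequence is still bad but need not be minimal---which is harmless, since from that point on you only use badness and the fact that $W(Z)$ is a wpo.
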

\begin{proof}
 We use Nash-Williams'~\cite{nash-williams63} minimal bad sequence argument. Given a partial order~$Y$, an infinite sequence $y_0,y_1,\ldots\subseteq Y$ is called good if there are indices $i<j$ with $y_i\leq_Y y_j$; otherwise it is called bad. Hence~$Y$ is a well partial order if, and only if, it contains no bad sequence. Aiming at a contradiction, we assume that there is a bad sequence $t_0,t_1,\ldots\subseteq\T W(X)$ while~$X$ a well partial order. We may assume that $t_0,t_1,\ldots$ is minimal, in the sense that $t_0,\dots,t_{i-1},t_i',t_{i+1}',\ldots$ is good whenever we have $h_X(t_i')<h_X(t_i)$. This step requires $\Pi^1_1$-comprehension; a~detailed justification can, for example, be found in the proof of~\cite[Theorem~3.10]{frw-kruskal}. For $i\in\mathbb N$ we now define $a_i\subseteq\T W(X)$ by
 \begin{equation*}
  a_i=\begin{cases}
        a & \text{if $t_i=\circ(a,\sigma)$ for some $\sigma\in W(a)$},\\
        \emptyset & \text{if $t_i$ is of the form $\overline x$}.
       \end{cases}
 \end{equation*}
 Let us show that $Z:=\bigcup\{a_i\,|\,i\in\mathbb N\}\subseteq\T W(X)$ is a well partial order. Assuming the contrary, we get a bad sequence $s_0,s_1,\dots$ in~$Z$. Since each set $a_i$ is finite, there are strictly increasing functions $i,j:\mathbb N\to\mathbb N$ with $s_{i(k)}\in a_{j(k)}$ for all~$k\in\mathbb N$. In particular we get $h_X(s_{i(0)})<h_X(t_{j(0)})$. Since the sequence~$t_0,t_1,\ldots$ was assumed to be minimal, this means that
 \begin{equation*}
  t_0,t_1,\dots,t_{j(0)-1},s_{i(0)},s_{i(1)},s_{i(2)},\ldots\subseteq\T W(X)
 \end{equation*}
 must be good. As $t_0,t_1,\ldots$ and $s_0,s_1,\ldots$ are bad, this is only possible if we have $t_k\leq_{\T W(X)}s_{i(l)}$ for some $k<j(0)$ and $l\in\mathbb N$. In view of $s_{i(l)}\in a_{j(l)}$ we can write $t_{j(l)}=\circ(a_{j(l)},\sigma)$ and conclude $t_k\leq_{\T W(X)}t_{j(l)}$. This inequality contradicts the assumption that~$t_0,t_1,\ldots$ is bad, so that $Z$ must be a well partial order after all. Since~$X$ is a well partial order, the bad sequence~$t_0,t_1,\ldots$ can only have finitely many entries of the form~$\overline x$. Passing to a subsequence, we may assume that all entries have the form $t_i=\circ(a_i,\sigma_i)$. Note that this subsequence is bad but not necessarily minimal; we still have $a_i\subseteq Z$ for any entry of the subsequence. Write $\iota_i:a_i\hookrightarrow Z$ for the inclusions and consider the sequence
 \begin{equation*}
  W(\iota_0)(\sigma_0),W(\iota_1)(\sigma_1),\ldots\subseteq W(Z).
 \end{equation*}
 Since $W$ is a WPO-dilator and $Z$ is a well partial order, we obtain indices $i<j$ with $W(\iota_i)(\sigma_i)\leq_{W(Z)}W(\iota_j)(\sigma_j)$. By factoring $\iota_i=\iota\circ\iota_i'$ into $\iota_i':a_i\hookrightarrow a_i\cup a_j$ and $\iota:a_i\cup a_j\hookrightarrow Z$, one readily deduces $W(\iota_i')(a_i)\leq_{W(a_i\cup a_j)}W(\iota_j')(a_j)$. Due to clause~(iii') of Definition~\ref{def:TW(X)} we get
 \begin{equation*}
  t_i=\circ(a_i,\sigma_i)\leq_{\T W(X)}\circ(a_j,\sigma_j)=t_j.
 \end{equation*}
 So~$t_0,t_1,\ldots$ cannot be bad after all.
\end{proof}

\section{A categorical characterization}

The term systems $\T W(X)$ from the previous section can be hard to handle, both in general arguments and in concrete examples. To resolve this issue, the present section provides a more semantic approach. We begin with a general notion:

\begin{definition}\label{def:Kruskal-fixed-point}
Consider a normal PO-dilator~$W$ and a partial order~$X$. A Kruskal fixed point of $W$ over~$X$ consists of a partial order~$Z$ and functions $\iota:X\to Z$ and $\kappa:W(Z)\to Z$ that satisfy $\rng(\iota)\cap\rng(\kappa)=\emptyset$ and
 \begin{alignat*}{3}
  \iota(x)&\leq_Z\iota(y)&&\quad\Rightarrow\quad&&x\leq_X y\quad\text{(for $x,y\in X$)},\\
  \iota(x)&\leq_Z\kappa(\tau)&&\quad\Leftrightarrow\quad&&\iota(x)\leqf_Z\supp^W_Z(\tau)\quad\text{(for $x\in X$ and $\tau\in W(Z)$)},\\
  \kappa(\sigma)&\not\leq_Z\iota(y)&& &&\text{for all $\sigma\in W(Z)$ and $y\in X$},\\
  \kappa(\sigma)&\leq_Z\kappa(\tau)&&\quad\Leftrightarrow\quad&&\sigma\leq_{W(Z)}\tau\text{ or }\kappa(\sigma)\leqf_Z\supp^W_Z(\tau)\quad\text{(for $\sigma,\tau\in W(Z)$)}.
 \end{alignat*}
\end{definition}

Note that we do not demand that $x\leq_X y$ implies $\iota(x)\leq_Z\iota(y)$. This will become important in the proof of Theorem~\ref{thm:deriv-exists}. The following is justified by Lemma~\ref{lem:normal-form}.

\begin{definition}\label{def:iota-kappa}
 Consider a normal PO-dilator~$W$. For each partial order~$X$ we define functions $\iota_X:X\to\T W(X)$ and $\kappa_X:W(\T W(X))\to\T W(X)$ by stipulating
 \begin{align*}
  \iota_X(x)&=\overline x,\\
  \kappa_X(\sigma)&=\circ(a,\sigma_0)\quad\text{for $\sigma\nf W(\iota_a)(\sigma_0)$}.
 \end{align*}
\end{definition}

Let us verify that $\T W(X)$ has the desired structure:

\begin{theorem}\label{thm:TW(X)-fixed-point}
 We consider a normal PO-dilator~$W$ and a partial order~$X$. The order~$\T W(X)$ and the functions $\iota_X$ and $\kappa_X$ form a Kruskal fixed point of $W$ over~$X$.
\end{theorem}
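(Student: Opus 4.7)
The plan is to verify each of the five bullet points of Definition~\ref{def:Kruskal-fixed-point} by inspection of Definitions~\ref{def:TW(X)} and~\ref{def:iota-kappa}, relying on Lemma~\ref{lem:normal-form} and Proposition~\ref{prop:fixed-points-partial-order} at the key places. First observe that Lemma~\ref{lem:normal-form} makes $\kappa_X$ a well-defined function: every $\sigma\in W(\T W(X))$ has a unique normal form $\sigma\nf W(\iota_a)(\sigma_0)$, and $a=\supp^W_{\T W(X)}(\sigma)$ is automatically partially ordered by $\leq_{\T W(X)}$ thanks to Proposition~\ref{prop:fixed-points-partial-order}, so that $\circ(a,\sigma_0)$ lies in $\T W(X)$. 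The range disjointness $\rng(\iota_X)\cap\rng(\kappa_X)=\emptyset$ is immediate: elements of $\rng(\iota_X)$ have syntactic form $\overline{x}$ while elements of $\rng(\kappa_X)$ have form $\circ(a,\sigma_0)$.

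For the order conditions I would run through the clauses (i'), (ii'), (iii') of Definition~\ref{def:TW(X)} in each case. The condition $\iota_X(x)\leq_{\T W(X)}\iota_X(y)\Rightarrow x\leq_X y$ follows because clauses (ii') and (iii') require $t$ to be of the form $\circ(b,\tau)$, so only clause (i') can apply. Similarly, $\kappa_X(\sigma)\not\leq_{\T W(X)}\iota_X(y)$ because every applicable clause forces $t$ to be of the form $\circ(b,\tau)$, which is incompatible with $t=\overline{y}$. For the mixed inequality $\iota_X(x)\leq_{\T W(X)}\kappa_X(\tau)$ with $\tau\nf W(\iota_b)(\tau_0)$, clause (iii') is inapplicable (since $s=\overline{x}$), so the only possibility is clause (ii'): namely $\overline{x}\leq_{\T W(X)}t'$ for some $t'\in b=\supp^W_{\T W(X)}(\tau)$. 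This is exactly the statement $\iota_X(x)\leqf_{\T W(X)}\supp^W_{\T W(X)}(\tau)$.

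The most delicate clause is the last one: $\kappa_X(\sigma)\leq_{\T W(X)}\kappa_X(\tau)$ for $\sigma\nf W(\iota_a)(\sigma_0)$ and $\tau\nf W(\iota_b)(\tau_0)$. Clause (ii') matches the second disjunct: it says $\circ(a,\sigma_0)\leq_{\T W(X)}t'$ for some $t'\in b=\supp^W_{\T W(X)}(\tau)$, which is $\kappa_X(\sigma)\leqf_{\T W(X)}\supp^W_{\T W(X)}(\tau)$. The hard part will be matching clause (iii') with the first disjunct $\sigma\leq_{W(\T W(X))}\tau$. Here one factors the inclusions via $a\cup b$: writing $\iota_a^Z:a\hookrightarrow\T W(X)$ and $\iota_{a\cup b}^Z:a\cup b\hookrightarrow\T W(X)$, and analogously for $b$, one has $\iota_a^Z=\iota_{a\cup b}^Z\circ\iota_a$ and $\iota_b^Z=\iota_{a\cup b}^Z\circ\iota_b$, so functoriality yields
\begin{equation*}
\sigma=W(\iota_{a\cup b}^Z)(W(\iota_a)(\sigma_0)),\qquad\tau=W(\iota_{a\cup b}^Z)(W(\iota_b)(\tau_0)).
\end{equation*}
By Proposition~\ref{prop:fixed-points-partial-order} the restriction of $\leq_{\T W(X)}$ to $a\cup b$ is a partial order, so $\iota_{a\cup b}^Z$ is an embedding of partial orders, and since $W$ preserves embeddings, $W(\iota_{a\cup b}^Z)$ is an embedding. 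Therefore the inequality $W(\iota_a)(\sigma_0)\leq_{W(a\cup b)}W(\iota_b)(\tau_0)$ of clause (iii') is equivalent to $\sigma\leq_{W(\T W(X))}\tau$, giving the desired equivalence. Since clause (iii') already requires $a\cup b$ to be partially ordered, no additional hypothesis is needed for the converse direction.
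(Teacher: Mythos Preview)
Your proof is correct and follows essentially the same route as the paper's: both verify the clauses of Definition~\ref{def:Kruskal-fixed-point} by direct inspection of Definition~\ref{def:TW(X)}, with the only substantive step being the equivalence between clause~(iii') and $\sigma\leq_{W(\T W(X))}\tau$, handled by factoring the inclusions through $a\cup b$ and using that $W$ preserves embeddings. You add a little extra care (well-definedness of $\kappa_X$, the explicit appeal to Proposition~\ref{prop:fixed-points-partial-order} for the partial-order hypothesis in clause~(iii')), but the argument is the same.
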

\begin{proof}
In view of Definition~\ref{def:TW(X)} it is immediate that we have $\rng(\iota_X)\cap\rng(\kappa_X)=\emptyset$, that $\iota_X(x)=\overline x\leq_{\T W(X)}\overline y=\iota_X(y)$ implies (and is indeed equivalent to) $x\leq_X y$, and that $\kappa_X(\sigma)=\circ(a,\sigma_0)\leq_{\T W(X)}\overline y=\iota_X(y)$ is always false. For $\tau\nf W(\iota_b)(\tau_0)$ we also get
\begin{equation*}
 \iota_X(x)=\overline x\leq_{\T W(X)}\circ(b,\tau_0)=\kappa_X(\tau)\quad\Leftrightarrow\quad\iota_X(x)\leqf_{\T W(X)}b=\supp^W_{\T W(X)}(\tau).
\end{equation*}
For the remaining equivalence we need to show
\begin{multline*}
\circ(a,\sigma_0)\leq_{\T W(X)}\circ(b,\tau_0)\,\Leftrightarrow{}\\
{}\Leftrightarrow\, W(\iota_a)(\sigma_0)\leq_{W(\T W(X))}W(\iota_b)(\tau_0)\text{ or }\circ(a,\sigma_0)\leqf_{\T W(X)}b,
\end{multline*}
with $\iota_a:a\hookrightarrow\T W(X)$ and $\iota_b:b\hookrightarrow\T W(X)$. In view of Definition~\ref{def:TW(X)} it suffices to observe that we have
\begin{equation*}
W(\iota_a)(\sigma_0)\leq_{W(\T W(X))}W(\iota_b)(\tau_0)\quad\Leftrightarrow\quad W(\iota_a')(\sigma_0)\leq_{W(a\cup b)}W(\iota_b')(\tau_0),
\end{equation*}
with $\iota_a':a\hookrightarrow a\cup b$ and $\iota_b':b\hookrightarrow a\cup b$. To establish this equivalence one considers the inclusion $\iota:a\cup b\hookrightarrow\T W(X)$ and composes the right side with~$W(\iota)$.
\end{proof}

To obtain a unique characterization, we use the following categorical notion.

\begin{definition}
Consider a normal PO-dilator~$W$ and a partial order~$X$. A Kruskal fixed point $(Z,\iota,\kappa)$ is called initial if any Kruskal fixed point $(Z',\iota',\kappa')$ of $W$ over~$X$ admits a unique quasi embedding $f:Z\to Z'$ with $f\circ\iota=\iota'$ and $f\circ\kappa=\kappa'\circ W(f)$.
\end{definition}

Like all initial objects, initial Kruskal fixed points are unique up to isomorphism.
The following criterion will be very useful.

\begin{theorem}\label{thm:initial-condition}
For a Kruskal fixed point $(Z,\iota,\kappa)$ of a normal PO-dilator~$W$ over a partial order~$X$, the following are equivalent:
\begin{enumerate}[label=(\roman*)]
\item We have $\rng(\iota)\cup\rng(\kappa)=Z$, and $x\leq_X y$ implies $\iota(x)\leq_Z\iota(y)$ for $x,y\in X$. Furthermore, there is a function $h:Z\to\mathbb N$ such that
\begin{equation*}
s\in\supp^W_Z(\sigma)\quad\Rightarrow\quad h(s)<h(\kappa(\sigma))
\end{equation*}
holds for any $s\in Z$ and any $\sigma\in W(Z)$.
\item The Kruskal fixed point $(Z,\iota,\kappa)$ is initial.
\end{enumerate}
\end{theorem}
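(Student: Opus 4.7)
The plan is to prove (i)$\Rightarrow$(ii) by a direct recursive construction of the required quasi-embedding, and then derive (ii)$\Rightarrow$(i) by transporting the structure of $\T W(X)$ along the canonical isomorphism.

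For (i)$\Rightarrow$(ii), fix any Kruskal fixed point $(Z',\iota',\kappa')$. Under (i), the maps $\iota$ and $\kappa$ are injective with disjoint ranges covering $Z$, so each $z\in Z$ has a unique representation $\iota(x)$ or $\kappa(\sigma)$. Injectivity of $\iota$ is immediate from antisymmetry, and injectivity of $\kappa$ follows from the Kruskal axioms together with an auxiliary monotone rank $h^*$ defined by $h^*(\iota(x))=0$ and $h^*(\kappa(\sigma))=\max(\{0\}\cup\{h^*(s)+1 : s\in\supp^W_Z(\sigma)\})$, which is well-defined by recursion on $h$ and is monotone under $\leq_Z$ thanks to the normality of $W$. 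Invoking Lemma~\ref{lem:normal-form}, I then define $f:Z\to Z'$ by recursion on $h(z)$ via
\begin{equation*}
f(\iota(x))=\iota'(x),\qquad f(\kappa(\sigma))=\kappa'(W(f\restriction a)(\sigma_0))\text{ for }\sigma\nf W(\iota_a)(\sigma_0),
\end{equation*}
the recursion being well-founded because $s\in a$ forces $h(s)<h(\kappa(\sigma))$.

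Simultaneously, by induction on $h(s)+h(t)$, I verify that $f$ is a quasi-embedding satisfying $f\circ\iota=\iota'$ and $f\circ\kappa=\kappa'\circ W(f)$; the commutation follows from $W(f\restriction a)(\sigma_0)=W(f)(\sigma)$ by functoriality. The quasi-embedding check splits on the forms of $s,t$ via the defining equivalences of a Kruskal fixed point; the only substantial case is $s=\kappa(\sigma),\,t=\kappa(\tau)$ with $\sigma\nf W(\iota_a)(\sigma_0)$ and $\tau\nf W(\iota_b)(\tau_0)$. The equivalence for $Z'$ then gives either $f(s)\leq_{Z'}f(s')$ for some $s'\in b$, whereupon IH yields $s\leq_Z s'$ and hence $s\leq_Z t$, or $W(f\restriction a)(\sigma_0)\leq_{W(Z')}W(f\restriction b)(\tau_0)$. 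In the latter case, $f\restriction(a\cup b)$ is a quasi-embedding by IH, so $W(f\restriction(a\cup b)):W(a\cup b)\to W(Z')$ is a quasi-embedding and reflects the inequality to $W(\iota_a')(\sigma_0)\leq_{W(a\cup b)}W(\iota_b')(\tau_0)$; lifting along the embedding $W(a\cup b)\hookrightarrow W(Z)$ then produces $\sigma\leq_{W(Z)}\tau$ and thus $s\leq_Z t$. Uniqueness of $f$ is proved by an analogous induction on $h$: any other $g$ satisfying the commutations agrees with $f$ on $\rng(\iota)$ directly and on $\rng(\kappa)$ because $W(g)(\sigma)$ depends only on $g\restriction\supp^W_Z(\sigma)$, where IH supplies $f=g$.

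For (ii)$\Rightarrow$(i), observe that $(\T W(X),\iota_X,\kappa_X)$ from Theorem~\ref{thm:TW(X)-fixed-point} is a Kruskal fixed point satisfying~(i): the generation clauses of Definition~\ref{def:TW(X)} give $\rng(\iota_X)\cup\rng(\kappa_X)=\T W(X)$, clause~(i') records that $\iota_X$ is order-preserving, and the height function $h_X$ introduced thereafter fulfils the strict inequality. By the direction just established, $\T W(X)$ is initial. If $(Z,\iota,\kappa)$ is also initial, the unique quasi-embeddings $\phi:\T W(X)\to Z$ and $\psi:Z\to\T W(X)$ must be mutually inverse (both compositions equal the identity by the uniqueness part of initiality), hence bijective quasi-embeddings and therefore order isomorphisms intertwining the $\iota$'s and $\kappa$'s. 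Transporting (i) from $\T W(X)$ to $Z$ along $\phi$ then supplies all three clauses for $(Z,\iota,\kappa)$, with $h:=h_X\circ\psi$ serving as the height function; its correctness is guaranteed by naturality of $\supp^W$ combined with $\psi\circ\kappa=\kappa_X\circ W(\psi)$, which translate the strict height inequality in $\T W(X)$ into the one needed in $Z$. The main obstacle will be the substantial case of the quasi-embedding verification in the first direction, where one must pull an inequality in $W(Z')$ back to one in $W(Z)$ through the common sub-order $a\cup b$; this step is the reason Kruskal fixed points are formulated in the category $\po$ and hinges on the fact that $W$ preserves quasi-embeddings.
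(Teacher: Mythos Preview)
Your overall architecture matches the paper's proof closely, but there is a genuine gap in the direction (i)$\Rightarrow$(ii): the induction on $h(s)+h(t)$ does \emph{not} suffice to conclude that $f\restriction(a\cup b)$ is a quasi embedding in the ``substantial'' case $s=\kappa(\sigma)$, $t=\kappa(\tau)$. You need the induction hypothesis for every pair $(s',t')\in(a\cup b)^2$, but for $s',t'\in a$ you only know $h(s'),h(t')<h(s)$, so $h(s')+h(t')$ can be as large as $2h(s)-2$, which may well exceed $h(s)+h(t)$ when $h(t)$ is small. (Concretely: take $h(s)=10$, $h(t)=1$, $h(s')=h(t')=9$.) The same problem already afflicts the well-definedness of $f(\kappa(\sigma))$, since forming $W(f\restriction a)$ requires knowing that $f\restriction a$ is a morphism in~$\po$, i.e.\ a quasi embedding on all pairs from~$a$.

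The paper fixes exactly this by replacing $h$ with a length function $l$ defined by $l(\iota(x))=0$ and $l(\kappa(\sigma))=1+\sum_{s\in\supp^W_Z(\sigma)}2\cdot l(s)$, and running the simultaneous induction on $l(r)$ and $l(s)+l(t)$. The factor $2$ guarantees that for any $s',t'\in a$ one has $l(s')+l(t')<l(\kappa(\sigma))\le l(s)+l(t)$, and similarly for pairs straddling $a$ and~$b$; the paper explicitly flags that ``it is crucial that we argue by induction on $l(s)+l(t)$, not on $h(s)+h(t)$.'' Your auxiliary $h^*$ is essentially $h$ again (a max, not a weighted sum) and does not help here. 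The rest of your argument---injectivity of $\iota,\kappa$, uniqueness by recursion on~$h$, and the transport along the isomorphism with $\T W(X)$ for (ii)$\Rightarrow$(i)---is correct and coincides with the paper's approach.
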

\begin{proof}
Let us first show that condition~(i) implies~(ii). For $s\in Z$ we define $l(s)\in\mathbb N$ by recursion on $h(s)$, setting
\begin{equation*}
l(\iota(x))=0\qquad\text{and}\qquad l(\kappa(\sigma))=1+\textstyle\sum_{s\in\supp^W_Z(\sigma)}2\cdot l(s).
\end{equation*}
Note that each element of $Z$ is covered by exactly one clause, since Definition~\ref{def:Kruskal-fixed-point} and part~(i) of the present theorem provide $\rng(\iota)\cap\rng(\kappa)=\emptyset$ and $\rng(\iota)\cup\rng(\kappa)=Z$. Now consider another Kruskal fixed point $(Z,\iota',\kappa')$. We first show that there is at most one quasi embedding $f:Z\to Z'$ with $f\circ\iota=\iota'$ and $f\circ\kappa=\kappa'\circ W(f)$. These equations amount to
\begin{align*}
f(\iota(x))&=\iota'(x) && \text{for $x\in X$},\\
f(\kappa(\sigma))&=\kappa'(W(f)(\sigma))=\kappa'(W(f\!\restriction\!a)(\sigma_0)) && \text{for $\sigma\nf W(\iota_a)(\sigma_0)\in W(Z)$},
\end{align*}
where $f\!\restriction\!a=f\circ\iota_a:a\to Z'$ is the restriction of~$f$. Once again, each argument of~$f$ is covered by exactly one of these clauses. From Lemma~\ref{lem:normal-form} we know that $\sigma\nf W(\iota_a)(\sigma_0)$ implies $\supp^W_Z(\sigma)=a$. Now a straightforward induction on $l(s)$ shows that $f(s)$ is uniquely determined. To establish existence we read the above as recursive clauses. We verify
\begin{align*}
r\in Z\quad&\Rightarrow\quad f(r)\in Z',\\
f(s)\leq_{Z'}f(t)\quad&\Rightarrow\quad s\leq_Z t
\end{align*}
by simultaneous induction on~$l(r)$ and~$l(s)+l(t)$. Let us verify the first claim for $r=\kappa(\sigma)$ with $\sigma\nf W(\iota_a)(\sigma_0)$. For $s,t\in a$ we have $l(s)+l(t)<l(r)$. Hence the simultaneous induction hypothesis ensures that $f\!\restriction\!a$ is a quasi embedding. We may thus form~$W(f\!\restriction\!a)$, as needed for the clause that defines the value~$f(r)\in Z'$. Let us now show that~$f$ is a quasi embedding. For $s=\iota(x)$ and $t=\iota(y)$ we see that
\begin{equation*}
 f(s)=f(\iota(x))=\iota'(x)\leq_{Z'}\iota'(y)=f(\iota(y))=f(t)
\end{equation*}
implies~$x\leq_X y$. By the condition in~(i) this implies $s=\iota(x)\leq_Z\iota(y)=t$, as required. For $s=\iota(x)$ and $t=\kappa(\tau)$ with $\tau\nf W(\iota_b)(\tau_0)$, a glance at Definition~\ref{def:Kruskal-fixed-point} reveals that $f(s)=\iota'(x)\leq_{Z'}\kappa'(W(f\!\restriction\!b)(\tau_0))=f(t)$ implies
\begin{equation*}
 f(s)\leqf_{Z'}\supp^W_{Z'}(W(f\!\restriction\!b)(\tau_0))=[f\!\restriction\!b]^{<\omega}(\supp^W_b(\tau_0))=[f]^{<\omega}(b).
\end{equation*}
As $t'\in b=\supp^W_Z(\tau)$ implies $h(t')<h(\kappa(\tau))=h(t)$, the induction hypothesis yields $s\leqf_Z\supp^W_Z(\tau)$, which implies $s=\iota(x)\leq_Z\kappa(\tau)=t$. For $s=\kappa(\sigma)$ and $t=\iota(y)$ it suffices to observe that $f(s)\leq_{Z'}f(t)$ cannot hold. Finally, we consider the case of $s=\kappa(\sigma)$ and $t=\kappa(\tau)$ with $\sigma\nf W(\iota_a)(\sigma_0)$ and $\tau\nf W(\iota_b)(\tau_0)$. If
\begin{equation*}
 f(s)=\kappa'(W(f\!\restriction\!a)(\sigma_0))\leq_{Z'}\kappa'(W(f\!\restriction\!b)(\tau_0))=f(t)
\end{equation*}
holds because of $f(s)\leqf_{Z'}\supp^W_{Z'}(W(f\!\restriction\!b)(\tau_0))$, then one argues as above. Now assume that we have
\begin{equation*}
 W(f\!\restriction\!a)(\sigma_0)\leq_{W(Z')}W(f\!\restriction\!b)(\tau_0).
\end{equation*}
The induction hypothesis ensures that $f\!\restriction\!(a\cup b):a\cup b\to Z'$ is a quasi embedding. Here it is crucial that we argue by induction on $l(s)+l(t)$, not on $h(s)+h(t)$. Let us factor $f\!\restriction\! a=f\!\restriction\!(a\cup b)\circ\iota_a'$ and $f\!\restriction\! b=f\!\restriction\!(a\cup b)\circ\iota_b'$, where $\iota_a':a\hookrightarrow a\cup b$ and $\iota_b':b\hookrightarrow a\cup b$ are the inclusions. Then the last inequality amounts to
\begin{equation*}
 W(f\!\restriction\!(a\cup b))\circ W(\iota_a')(\sigma_0)\leq_{W(Z')}W(f\!\restriction\!(a\cup b))\circ W(\iota_b')(\tau_0).
\end{equation*}
Since $W(f\!\restriction\!(a\cup b))$ is a quasi embedding, we get $W(\iota_a')(\sigma_0)\leq_{W(a\cup b)}W(\iota_b')(\tau_0)$. Now compose both sides with the embedding $W(\iota)$, where $\iota:a\cup b\hookrightarrow Z$ is the inclusion. This yields
\begin{equation*}
 \sigma=W(\iota_a)(\sigma_0)=W(\iota\circ\iota_a')(\sigma_0)\leq_Z W(\iota\circ\iota_b')(\tau_0)=W(\iota_b)(\tau_0)=\tau.
\end{equation*}
The latter implies $s=\kappa(\sigma)\leq_Z\kappa(\tau)=t$, which completes the proof that~(i) implies~(ii). To show that~(ii) implies~(i), we first establish~(i) for the Kruskal fixed point $(\T W(X),\iota_X,\kappa_X)$ from Theorem~\ref{thm:TW(X)-fixed-point}. Any $\circ(a,\sigma_0)\in\T W(X)$ arises as $\kappa_X(\sigma)$ for $\sigma=W(\iota_a)(\sigma_0)$. Here the condition $\supp^W_a(\sigma_0)=a$ from Definition~\ref{def:TW(X)} ensures that $\sigma$ is in normal form. This shows $\rng(\iota_X)\cup\rng(\kappa_X)=\T W(X)$. The requirement that $x\leq_X y$ implies $\iota_X(x)=\overline x\leq_{\T W(X)}\overline y=\iota_X(y)$ is immediate by Definition~\ref{def:TW(X)}. A height function $h_X:\T W(X)\to\mathbb N$ has been defined before the statement of Lemma~\ref{lem:ineq-heights}. For arbitrary elements $\sigma\nf W(\iota_a)(\sigma_0)\in W(\T W(X))$ and $s\in\supp^W_{\T W(X)}(\sigma)=a$, the construction of $h_X$ entails
\begin{equation*}
 h_X(s)<h_X(\circ(a,\sigma_0))=h_X(\kappa_X(\sigma)),
\end{equation*}
just as needed. Since we have already shown that~(i) implies~(ii), we can conclude that $(\T W(X),\iota_X,\kappa_X)$ is an initial Kruskal fixed point of~$W$ over~$X$. If $(Z,\iota,\kappa)$ is any initial Kruskal fixed point as in~(ii), we get an isomorphism $f:Z\to\T W(X)$ with $f\circ\iota=\iota_X$ and $f\circ\kappa=\kappa_X\circ W(f)$. As~$W(f)$ is an isomorphism, the ranges of $\kappa_X$ and $\kappa_X\circ W(f)$ coincide. Hence we get $\T W(X)=\rng(f\circ\iota)\cup\rng(f\circ\kappa)$ and then $Z=\rng(\iota)\cup\rng(\kappa)$. We also learn that $x\leq_X y$ implies
\begin{equation*}
 \iota(x)=f\circ\iota_X(x)\leq_{Z}f\circ\iota_X(y)=\iota(y).
\end{equation*}
Finally, we define $h:Z\to\mathbb N$ by $h(s)=h_X(f(s))$. For $s\in\supp^W_Z(\sigma)$ we have
\begin{equation*}
f(s)\in[f]^{<\omega}(\supp^W_Z(\sigma))=\supp^W_{\T W(X)}(W(f)(\sigma)).
\end{equation*}
We can conclude
\begin{equation*}
 h(s)=h_X(f(s))<h_X(\kappa_X\circ W(f)(\sigma))=h_X(f\circ\kappa(\sigma))=h(\kappa(\sigma)),
\end{equation*}
as required for~(i).
\end{proof}

The following result was shown as part of the previous proof. It is important, because it establishes the existence of initial Kruskal fixed points.

\begin{corollary}\label{cor:fixed-points-exist}
For each normal PO-dilator~$W$ and each partial order~$X$, the Kruskal fixed point $(\T W(X),\iota_X,\kappa_X)$ is initial.
\end{corollary}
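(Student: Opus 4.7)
The plan is to derive the corollary directly from the preceding material, since essentially all the work has already been done. By Theorem~\ref{thm:TW(X)-fixed-point}, we already know that $(\T W(X),\iota_X,\kappa_X)$ is a Kruskal fixed point of $W$ over~$X$. To conclude that it is \emph{initial}, I would invoke the implication (i)$\Rightarrow$(ii) of Theorem~\ref{thm:initial-condition} and verify the three conditions that make up~(i).

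First, I would check surjectivity: every element of $\T W(X)$ is either of the form $\overline x=\iota_X(x)$ or of the form $\circ(a,\sigma_0)$. In the latter case, Definition~\ref{def:TW(X)} guarantees $\supp^W_a(\sigma_0)=a$, so $\sigma:=W(\iota_a)(\sigma_0)$ is in normal form and satisfies $\kappa_X(\sigma)=\circ(a,\sigma_0)$. This gives $\rng(\iota_X)\cup\rng(\kappa_X)=\T W(X)$. The monotonicity condition on $\iota_X$ is immediate, since $x\leq_X y$ triggers clause~(i') of Definition~\ref{def:TW(X)} and yields $\overline x\leq_{\T W(X)}\overline y$.

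For the height function I would reuse $h_X:\T W(X)\to\mathbb N$ defined immediately before Lemma~\ref{lem:ineq-heights}. For any $\sigma\in W(\T W(X))$ with normal form $\sigma\nf W(\iota_a)(\sigma_0)$, Lemma~\ref{lem:normal-form} gives $\supp^W_{\T W(X)}(\sigma)=a$, and the recursive definition of $h_X$ yields $h_X(s)<h_X(\circ(a,\sigma_0))=h_X(\kappa_X(\sigma))$ for every $s\in a$. This is exactly the condition required in~(i).

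Having verified condition~(i), the implication (i)$\Rightarrow$(ii) of Theorem~\ref{thm:initial-condition} delivers initiality. There is no real obstacle here, since the substantive argument --- constructing the unique quasi embedding into any other Kruskal fixed point by recursion on the length function $l$ built from $h$, and verifying the quasi embedding property by simultaneous induction --- is precisely the content of (i)$\Rightarrow$(ii) in the preceding theorem. The only point worth flagging is the use of Lemma~\ref{lem:normal-form} to make sense of $\kappa_X$ as a function defined via normal forms, which is the reason the corollary cannot be stated for arbitrary PO-dilators without the normality assumption.
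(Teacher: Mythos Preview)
Your proposal is correct and follows exactly the paper's approach: the paper simply states that the result ``was shown as part of the previous proof,'' referring to the portion of the proof of Theorem~\ref{thm:initial-condition} where condition~(i) is verified for $(\T W(X),\iota_X,\kappa_X)$ via surjectivity of $\iota_X\cup\kappa_X$, monotonicity of $\iota_X$, and the height function $h_X$, and then (i)$\Rightarrow$(ii) is applied---precisely the steps you spell out. One small quibble: your closing remark conflates the ``normal form'' of Lemma~\ref{lem:normal-form} (which holds for any PO-dilator) with the ``normality'' condition of Definition~\ref{def:normal}; the latter is needed not for $\kappa_X$ to be well defined but for $\T W(X)$ to be a partial order and for Lemma~\ref{lem:ineq-heights}.
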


For later use we also record the following result.

\begin{lemma}\label{lem:embeddings-preserved}
 Let $(Z,\iota,\kappa)$ be an initial Kruskal fixed point of a normal PO-dilator~$W$ over a partial order~$X$. Consider another Kruskal fixed point~$(Z',\iota',\kappa')$ and the unique quasi embedding $f:Z\to Z'$ with $f\circ\iota=\iota'$ and $f\circ\kappa=\kappa'\circ W(f)$. If $x\leq_X y$ implies $\iota'(x)\leq_{Z'}\iota'(y)$ for all $x,y\in X$, then $f$ is an embedding.
\end{lemma}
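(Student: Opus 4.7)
The plan is to invoke Theorem~\ref{thm:initial-condition} to equip~$Z$ with the length function $l:Z\to\mathbb N$ from the proof of that theorem, defined by $l(\iota(x))=0$ and $l(\kappa(\sigma))=1+\sum_{s\in\supp^W_Z(\sigma)}2\cdot l(s)$. Since $f$ is already a quasi embedding by assumption, the task reduces to showing the reverse implication $s\leq_Z t\Rightarrow f(s)\leq_{Z'}f(t)$, which I will establish by induction on $l(s)+l(t)$. Because $(Z,\iota,\kappa)$ is initial, we have $Z=\rng(\iota)\cup\rng(\kappa)$ with the two ranges disjoint, so any pair $(s,t)$ falls into exactly one of four cases according to which range each element lies in.

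Three of the four cases are easy. The $(\iota,\iota)$ case follows directly from the first clause of Definition~\ref{def:Kruskal-fixed-point}: $\iota(x)\leq_Z\iota(y)$ forces $x\leq_X y$, and the extra hypothesis on~$\iota'$ then yields $f(\iota(x))=\iota'(x)\leq_{Z'}\iota'(y)=f(\iota(y))$. The $(\kappa,\iota)$ case is vacuous by the third clause of Definition~\ref{def:Kruskal-fixed-point}. The $(\iota,\kappa)$ case uses the second clause: $\iota(x)\leq_Z\kappa(\tau)$ gives some $t'\in\supp^W_Z(\tau)$ with $\iota(x)\leq_Z t'$, and since $l(t')<l(\kappa(\tau))$ the induction hypothesis produces $f(\iota(x))\leq_{Z'}f(t')$; naturality of supports places $f(t')$ in $\supp^W_{Z'}(W(f)(\tau))$, so the same clause applied in~$Z'$ delivers $f(\iota(x))\leq_{Z'}\kappa'(W(f)(\tau))=f(\kappa(\tau))$.

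The main obstacle is the $(\kappa,\kappa)$ case, with $\sigma\nf W(\iota_a)(\sigma_0)$ and $\tau\nf W(\iota_b)(\tau_0)$. The fourth clause of Definition~\ref{def:Kruskal-fixed-point} splits $\kappa(\sigma)\leq_Z\kappa(\tau)$ into two disjuncts; the ``$\leqf$'' disjunct is handled exactly as the $(\iota,\kappa)$ case above. The genuinely hard disjunct is $\sigma\leq_{W(Z)}\tau$, which only provides an inequality in $W(Z)$: to move it to $W(Z')$ I need $W(f)$ to be order-preserving on the relevant elements, and this is precisely where indexing the induction by~$l$ rather than by height pays off. Every element of $a\cup b$ has $l$-value strictly smaller than one of $l(\kappa(\sigma))$, $l(\kappa(\tau))$, so the induction hypothesis upgrades $f\!\restriction\!(a\cup b)$ from a quasi embedding to a genuine embedding, whence $W(f\!\restriction\!(a\cup b))$ is an embedding because $W$ preserves embeddings. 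Factoring $\iota_a=\iota\circ\iota_a'$ and $\iota_b=\iota\circ\iota_b'$ through the inclusion $\iota:a\cup b\hookrightarrow Z$ (as in the proof of Theorem~\ref{thm:initial-condition}), one first transfers $\sigma\leq_{W(Z)}\tau$ back to $W(\iota_a')(\sigma_0)\leq_{W(a\cup b)}W(\iota_b')(\tau_0)$ using that $W(\iota)$ is an embedding, then applies $W(f\!\restriction\!(a\cup b))$ to obtain $W(f)(\sigma)\leq_{W(Z')}W(f)(\tau)$. The fourth clause of Definition~\ref{def:Kruskal-fixed-point}, now applied in~$Z'$, finally yields $f(\kappa(\sigma))=\kappa'(W(f)(\sigma))\leq_{Z'}\kappa'(W(f)(\tau))=f(\kappa(\tau))$, completing the induction.
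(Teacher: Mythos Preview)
Your proof is correct and follows exactly the approach the paper intends: the paper's own proof merely says to ``read the given argument in reverse,'' referring to the induction on $l(s)+l(t)$ carried out in the proof of Theorem~\ref{thm:initial-condition}, and you have simply written out that reversal in full. In particular, your handling of the $\sigma\leq_{W(Z)}\tau$ disjunct---using the induction hypothesis (together with the already-known quasi-embedding property) to make $f\!\restriction\!(a\cup b)$ a genuine embedding, so that $W$ applied to it preserves the inequality---is precisely the mirror image of the corresponding step in that proof.
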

\begin{proof}
 Define $l:Z\to\mathbb N$ as in the proof of Theorem~\ref{thm:initial-condition}. In the latter we have used induction on $l(s)+l(t)$ to show that $f(s)\leq_{Z'}f(t)$ implies $s\leq_Z t$. Assuming that $x\leq_X y$ implies $\iota'(x)\leq_{Z'}\iota'(y)$, one can read the given argument in reverse, to show that $s\leq_Z t$ does also imply $f(s)\leq_{Z'}f(t)$.
\end{proof}

So far, the notation $\T W(X)$ has been reserved for the term systems constructed in Definition~\ref{def:TW(X)}. In the following sections we will also use $\T W(X)$ for other initial Kruskal fixed points of $W$ over~$X$. This is harmless, since we have shown that all these fixed points are equivalent.

\section{Kruskal derivatives}\label{sect:kruskal-derivative}

Consider a normal PO-dilator~$W$. As shown in the previous section, each partial order~$X$ gives rise to an initial Kruskal fixed point~$(\T W(X),\iota_X,\kappa_X)$. In the present section we show that the transformation $X\mapsto\T W(X)$ of partial orders can be extended into a normal PO-dilator~$\T W$. More precisely, we will show that there is an essentially unique extension in the sense of the following definition.

\begin{definition}\label{def:kruskal-deriv}
 A Kruskal derivative of a normal PO-dilator $W$ is tuple $(\T W,\iota,\kappa)$ that consists of a normal PO-dilator $\T W$ and two families of functions
 \begin{equation*}
  \iota_X:X\to\T W(X)\quad\text{and}\quad\kappa_X:W(\T W(X))\to\T W(X)
 \end{equation*}
 indexed by the partial order~$X$, such that the following properties are satisfied:
 \begin{enumerate}[label=(\roman*)]
  \item The tuple $(\T W(X),\iota_X,\kappa_X)$ is an initial Kruskal fixed point of~$W$ over~$X$, for each partial order~$X$.
  \item We have $\iota_Y\circ f=\T W(f)\circ\iota_X$ and $\T W(f)\circ\kappa_X=\kappa_Y\circ W(\T W(f))$, for any quasi embedding $f:X\to Y$ between partial orders.
  \end{enumerate}
\end{definition}

Let us begin by proving existence:

\begin{theorem}\label{thm:deriv-exists}
 Each normal PO-dilator has a Kruskal derivative.
\end{theorem}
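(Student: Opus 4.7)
The plan is to leverage the initiality established in Corollary~\ref{cor:fixed-points-exist} to define the functorial action of $\T W$, and then construct the rest of the PO-dilator structure on top. For a quasi embedding $f \colon X \to Y$, the tuple $(\T W(Y), \iota_Y \circ f, \kappa_Y)$ is a Kruskal fixed point of $W$ over $X$: every clause of Definition~\ref{def:Kruskal-fixed-point} transfers immediately from $(\T W(Y), \iota_Y, \kappa_Y)$, the only noteworthy check being that $\iota_Y(f(x)) \leq_{\T W(Y)} \iota_Y(f(y))$ implies $x \leq_X y$, which holds because $\iota_Y$ and $f$ are both quasi embeddings. Note that $\iota_Y \circ f$ need not preserve order, which is harmless as emphasised after Definition~\ref{def:Kruskal-fixed-point}. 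Initiality of $(\T W(X), \iota_X, \kappa_X)$ then produces a unique quasi embedding $\T W(f) \colon \T W(X) \to \T W(Y)$ with $\T W(f) \circ \iota_X = \iota_Y \circ f$ and $\T W(f) \circ \kappa_X = \kappa_Y \circ W(\T W(f))$, which is precisely clause~(ii) of Definition~\ref{def:kruskal-deriv}. Functoriality $\T W(\id_X) = \id_{\T W(X)}$ and $\T W(g \circ f) = \T W(g) \circ \T W(f)$ falls out of the uniqueness part of initiality, since the identity and the composite satisfy the relevant defining equations. Preservation of embeddings follows from Lemma~\ref{lem:embeddings-preserved}: if $f$ is an embedding, then $x \leq_X y$ implies $\iota_Y(f(x)) \leq_{\T W(Y)} \iota_Y(f(y))$, and the lemma concludes that $\T W(f)$ is an embedding.

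Next I define the supports recursively by $\supp^{\T W}_X(\iota_X(x)) = \{x\}$ and $\supp^{\T W}_X(\kappa_X(\sigma)) = \bigcup\{\supp^{\T W}_X(t) \mid t \in \supp^W_{\T W(X)}(\sigma)\}$. This recursion is unambiguous because $\rng(\iota_X) \cap \rng(\kappa_X) = \emptyset$ and $\rng(\iota_X) \cup \rng(\kappa_X) = \T W(X)$, as established in the proof of Theorem~\ref{thm:initial-condition}. Naturality $[f]^{<\omega} \circ \supp^{\T W}_X = \supp^{\T W}_Y \circ \T W(f)$ is straightforward by induction on the length function $l_X$, using the characterising equations of $\T W(f)$ together with naturality of $\supp^W$. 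For the support condition (which, by Definition~\ref{def:po-dilator}, needs only to be checked for embeddings $f$), I construct preimages by induction on $l_Y(s)$: if $s = \iota_Y(y)$ with $\{y\} \subseteq \rng(f)$, pick $x$ with $f(x) = y$ and take $\iota_X(x)$; if $s = \kappa_Y(\tau)$ with $\tau \nf W(\iota_b)(\tau_0)$, then each $t \in b = \supp^W_{\T W(Y)}(\tau)$ satisfies $\supp^{\T W}_Y(t) \subseteq \supp^{\T W}_Y(s) \subseteq \rng(f)$, so by induction $b \subseteq \rng(\T W(f))$, and the support condition for the PO-dilator $W$ applied to the embedding $\T W(f)$ supplies $\tau' \in W(\T W(X))$ with $W(\T W(f))(\tau') = \tau$, whence $\kappa_X(\tau')$ is the required preimage.

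Normality is then verified by induction on $l(s) + l(t)$ for $s \leq_{\T W(X)} t$. The case $s = \overline{x}$, $t = \overline{y}$ is trivial. If $t = \circ(b, \tau)$ and $s \leq_{\T W(X)} t'$ for some $t' \in b$, the induction hypothesis gives $\supp^{\T W}_X(s) \leqf_X \supp^{\T W}_X(t') \subseteq \supp^{\T W}_X(t)$. In the remaining case $s = \circ(a, \sigma)$, $t = \circ(b, \tau)$ with $W(\iota_a)(\sigma) \leq_{W(a\cup b)} W(\iota_b)(\tau)$, normality of $W$ yields $a \leqf_{\T W(X)} b$; for each $s' \in a$ pick $t' \in b$ with $s' \leq_{\T W(X)} t'$, apply the induction hypothesis to get $\supp^{\T W}_X(s') \leqf_X \supp^{\T W}_X(t')$, and take the union over $s' \in a$.

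The main technical obstacle I foresee is the support condition for $\T W$: it requires preservation of embeddings to be in place first (so that $\T W(f)$ is a bona fide embedding when $f$ is, letting us invoke the support condition of the given PO-dilator $W$), and it combines an induction on height with a delicate use of the ambient support machinery. Everything else, including naturality, functoriality, and normality, is routine bookkeeping once the initiality construction and the recursive definition of $\supp^{\T W}$ are fixed.
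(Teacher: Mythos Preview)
Your proposal is correct and mirrors the paper's proof almost step for step: the same use of initiality to define $\T W(f)$, the same appeal to Lemma~\ref{lem:embeddings-preserved} for preservation of embeddings, the same recursive definition of $\supp^{\T W}$, and the same inductive verifications of naturality, the support condition, and normality. The only cosmetic difference is that you induct on the length functions $l_X$ where the paper uses the height functions $h_X$; both measures work for the same reasons.
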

\begin{proof}
Consider a normal PO-dilator~$W$. For each partial order~$X$, Corollary~\ref{cor:fixed-points-exist} provides an initial Kruskal fixed point~$(\T W(X),\iota_X,\kappa_X)$ of~$W$ over~$X$. Given a quasi embedding $f:X\to Y$, it is easy to see that $(\T W(Y),\iota_Y\circ f,\kappa_Y)$ is a Kruskal fixed point of~$W$ over~$X$ as well. Since $(\T W(X),\iota_X,\kappa_X)$ is initial, there is a unique quasi embedding $\T W(f):\T W(X)\to\T W(Y)$ with
 \begin{equation*}
  \iota_Y\circ f=\T W(f)\circ\iota_X\quad\text{and}\quad\T W(f)\circ\kappa_X=\kappa_Y\circ W(\T W(f)).
 \end{equation*}
 In order to obtain a Kruskal derivative $(\T W,\iota,\kappa)$, it suffices to turn~$\T W$ into a normal PO-dilator. To show that $\T W$ is a functor, one checks that $\T W(g)\circ\T W(f)$ satisfies the equations that characterize~$\T W(g\circ f)$. If $f:X\to Y$ is an embedding, then $x\leq_X y$ implies $\iota_Y\circ f(x)\leq_{\T W(Y)}\iota_Y\circ f(y)$, since $\iota_Y$ must satisfy the condition from part~(i) of Theorem~\ref{thm:initial-condition}. Hence Lemma~\ref{lem:embeddings-preserved} ensures that $\T W(f)$ is again an embedding, as required in part~(i) of Definition~\ref{def:po-dilator}. It remains to exhibit suitable support functions
 \begin{equation*}
  \supp^{\T W}_X:\T W(X)\to[X]^{<\omega}.
 \end{equation*}
 In view of Theorem~\ref{thm:initial-condition} we can recursively define
 \begin{align*}
  \supp^{\T W}_X(\iota_X(x))&=\{x\},\\
  \supp^{\T W}_X(\kappa_X(\sigma))&=\bigcup\{\supp^{\T W}_X(s)\,|\,s\in\supp^W_{\T W(X)}(\sigma)\}.
 \end{align*}
 To show naturality, one verifies
 \begin{equation*}
  \supp^{\T W}_Y(\T W(f)(s))=[f]^{<\omega}(\supp^{\T W}_X(s))
 \end{equation*}
 by induction on $h_X(s)$, where $h_X:\T W(X)\to\mathbb N$ is as in part~(i) of Theorem~\ref{thm:initial-condition}. To satisfy the support condition from part~(ii) of Definition~\ref{def:po-dilator}, we need to establish
 \begin{equation*}
  \supp^{\T W}_Y(s)\subseteq\rng(f)\quad\Rightarrow\quad s\in\rng(\T W(f))
 \end{equation*}
 for an embedding~$f:X\to Y$ (recall that the converse implication is automatic). We use induction on~$h_Y(s)$. For $s=\iota_Y(y)$ we see that $\{y\}=\supp^{\T W}_Y(s)\subseteq\rng(f)$ yields $y=f(x)$ for some $x\in X$. This entails
 \begin{equation*}
  s=\iota_Y\circ f(x)=\T W(f)\circ\iota_X(x)\in\rng(\T W(f)).
 \end{equation*}
 Now consider $s=\kappa_Y(\sigma)$. For any $s'\in\supp^W_{\T W(Y)}(\sigma)$ we have $h_Y(s')<h_Y(s)$ and
 \begin{equation*}
  \supp^{\T W}_Y(s')\subseteq\supp^{\T W}_Y(s)\subseteq\rng(f),
 \end{equation*}
 so that the induction hypothesis yields $s'\in\rng(\T W(f))$. Thus we get
 \begin{equation*}
  \supp^W_{\T W(Y)}(\sigma)\subseteq\rng(\T W(f)).
 \end{equation*}
Now the support condition for the PO-dilator~$W$ yields $\sigma=W(\T W(f))(\sigma_0)$ for some $\sigma_0\in W(\T W(X))$. We then obtain
\begin{equation*}
 s=\kappa_Y\circ W(\T W(f))(\sigma_0)=\T W(f)\circ\kappa_X(\sigma_0)\in\rng(\T W(f)),
\end{equation*}
 as required. It remains to show that the PO-dilator~$\T W$ is normal. We verify
 \begin{equation*}
  s\leq_{\T W(X)}t\quad\Rightarrow\quad\supp^{\T W}_X(s)\leqf_X\supp^{\T W}_X(t)
 \end{equation*}
 by induction on~$h_X(s)+h_X(t)$. If we have $s=\iota_X(x)\leq_{\T W(X)}\iota_X(y)=t$, then we must have $x\leq_X y$ and hence
 \begin{equation*}
  \supp^{\T W}_X(s)=\{x\}\leqf_X\{y\}=\supp^{\T W}_X(t).
 \end{equation*}
 Now consider the case of an inequality $s\leq_{\T W(X)}\kappa_X(\tau)=t$ that holds because we have $s\leq_{\T W(X)}t'$ for some $t'\in\supp^W_{\T W(X)}(\tau)$. In view of $h_X(t')<h_X(t)$, the induction hypothesis yields
 \begin{equation*}
  \supp^{\T W}_X(s)\leqf_X\supp^{\T W}_X(t')\subseteq\supp^{\T W}_X(t).
 \end{equation*}
 Finally, assume that $s=\kappa_X(\sigma)\leq_{\T W(X)}\kappa_X(\tau)=t$ holds due to $\sigma\leq_{W(\T W(X))}\tau$. Since~$W$ is normal, we get $\supp^W_{\T W(X)}(\sigma)\leqf_{\T W(X)}\supp^W_{\T W(X)}(\tau)$. Given an arbitrary $s'\in\supp^W_{\T W(X)}(\sigma)$, we may then pick a $t'\in\supp^W_{\T W(X)}(\tau)$ with $s'\leq_{\T W(X)}t'$. By induction hypothesis we get
 \begin{equation*}
  \supp^{\T W}_X(s')\leqf_X\supp^{\T W}_X(t')\subseteq\supp^{\T W}_X(t).
 \end{equation*}
 Since $s'\in\supp^W_{\T W(X)}(\sigma)$ was arbitrary, this establishes
 \begin{equation*}
  \supp^{\T W}_X(s)=\bigcup\{\supp^{\T W}_X(s')\,|\,s'\in\supp^W_{\T W(X)}(\sigma)\}\leqf_X\supp^{\T W}_X(t),
 \end{equation*}
 as required.
\end{proof}

Let us highlight some of the information that is implicit in the previous proof:

\begin{remark}
In order to construct a Kruskal derivative of a specific PO-dilator, one can follow the proof of Theorem~\ref{thm:deriv-exists}. The latter shows that we only need to find a family of initial Kruskal fixed points. The extension into a Kruskal derivative is then automatic. In particular, the fact that one obtains a normal PO-dilator does not need to be verified in each specific case. Also observe that the functor $\T W$ was uniquely determined by the initial Kruskal fixed points~$\T W(X)$. The choice of support functions is also unique (as for any PO-dilator), since $\supp^{\T W}_X(s)$ must be the smallest set $a\subseteq X$ with $s\in\rng(\T W(\iota_a))$, where $\iota_a:a\hookrightarrow\T W(X)$ is the inclusion: In one direction, the support condition from part~(ii) of Definition~\ref{def:po-dilator} ensures $s\in\rng(\T W(\iota_a))$ for $a=\supp^{\T W}_X(s)$. In the other direction, naturality entails that $s=\T W(\iota_a)(s_0)$ yields
\begin{equation*}
 a\subseteq[\iota_a]^{<\omega}(\supp^{\T W}_a(s_0))=\supp^{\T W}_X(\T W(\iota_a)(s_0))=\supp^{\T W}_X(s).
\end{equation*}
We have not included this information in the statement of Theorem~\ref{thm:deriv-exists}, because a more general uniqueness result will be shown below.
\end{remark} 

To prepare our uniqueness result, we recall that two PO-dilators $(V,\supp^V)$ and $(W,\supp^W)$ are equivalent if there is a natural isomorphism~$\eta:V\Rightarrow W$ of functors. It may also seem reasonable to demand
\begin{equation*}
 \supp^W_X\circ\eta_X=\supp^V_X
\end{equation*}
for any partial order~$X$. However, the latter turns out to be automatic. Girard~\cite{girard-pi2} has shown that this is the case for any natural transformation between dilators of linear orders (cf.~also \cite[Lemma~2.17]{freund-rathjen_derivatives}, which is closer to our notation). One can check that the proof remains valid for partial orders. In the case of an isomorphism, the argument is particularly simple: Given $\sigma\in W(X)$, we invoke Lemma~\ref{lem:normal-form} to write $\sigma=W(\iota_a)(\sigma_0)$ with $a=\supp^V_X(\sigma)$. We then get
\begin{multline*}
 \supp^W_X\circ\eta_X(\sigma)=\supp^W_X(\eta_X\circ V(\iota_a)(\sigma_0))=\supp^W_X(W(\iota_a)\circ\eta_a(\sigma_0))=\\
 =[\iota_a]^{<\omega}(\supp^W_a(\eta_a(\sigma_0)))\subseteq\rng(\iota_a)=a=\supp^V_X(\sigma).
\end{multline*}
By applying the same argument to the inverse of $\eta$, we also get
\begin{equation*}
 \supp^V_X(\sigma)=\supp^V_X\circ\eta_X^{-1}(\eta_X(\sigma))\subseteq\supp^W_X(\eta_X(\sigma))=\supp^W_X\circ\eta_X(\sigma).
\end{equation*}
The following result shows that Kruskal derivatives are essentially unique.

\begin{theorem}\label{thm:derivs-unique}
 For any two Kruskal derivatives $(\T^0W,\iota^0,\kappa^0)$ and $(\T^1W,\iota^1,\kappa^1)$ of a normal PO-dilator~$W$, there is a natural isomorphism $\eta:\T^0W\Rightarrow\T^1W$ such that we have $\eta_X\circ\iota^0_X=\iota^1_X$ and $\eta_X\circ\kappa^0_X=\kappa^1_X\circ W(\eta_X)$ for any partial order~$X$.
\end{theorem}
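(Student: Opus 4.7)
The plan is to apply the initiality of both $(\T^0W(X),\iota^0_X,\kappa^0_X)$ and $(\T^1W(X),\iota^1_X,\kappa^1_X)$ as Kruskal fixed points of~$W$ over~$X$. Viewing the first as initial, the universal property supplies a unique quasi embedding $\eta_X:\T^0W(X)\to\T^1W(X)$ with $\eta_X\circ\iota^0_X=\iota^1_X$ and $\eta_X\circ\kappa^0_X=\kappa^1_X\circ W(\eta_X)$. Swapping the two roles yields a unique quasi embedding $\eta'_X$ in the opposite direction satisfying the analogous equations. The composition $\eta'_X\circ\eta_X$ is then a quasi embedding $\T^0W(X)\to\T^0W(X)$ that meets the universal conditions characterizing the identity, so by the uniqueness clause of initiality $\eta'_X\circ\eta_X=\id_{\T^0W(X)}$. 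Symmetrically $\eta_X\circ\eta'_X=\id_{\T^1W(X)}$, whence $\eta_X$ is a bijection whose inverse $\eta'_X$ is again a quasi embedding.

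To upgrade $\eta_X$ from a quasi embedding to an order isomorphism I would invoke Lemma~\ref{lem:embeddings-preserved}. Its hypothesis is that $x\leq_X y$ implies $\iota^1_X(x)\leq_{\T^1W(X)}\iota^1_X(y)$, which is provided by part~(i) of Theorem~\ref{thm:initial-condition} applied to the initial Kruskal fixed point $(\T^1W(X),\iota^1_X,\kappa^1_X)$. So $\eta_X$ is an embedding, and the symmetric argument shows the same for $\eta'_X$; hence $\eta_X$ is an isomorphism of partial orders.

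For naturality, fix a quasi embedding $f:X\to Y$. As observed in the proof of Theorem~\ref{thm:deriv-exists}, the tuple $(\T^1W(Y),\iota^1_Y\circ f,\kappa^1_Y)$ is a Kruskal fixed point of~$W$ over~$X$. Since $(\T^0W(X),\iota^0_X,\kappa^0_X)$ is initial, there is a \emph{unique} quasi embedding $g:\T^0W(X)\to\T^1W(Y)$ with $g\circ\iota^0_X=\iota^1_Y\circ f$ and $g\circ\kappa^0_X=\kappa^1_Y\circ W(g)$. A short calculation, substituting the defining equations of $\iota^i$, $\kappa^i$, $\T^iW(f)$ and~$\eta$, shows that both $\eta_Y\circ\T^0W(f)$ and $\T^1W(f)\circ\eta_X$ satisfy these two equations; by uniqueness they must coincide. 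This is exactly the required naturality square.

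The conceptual step that needs the most care is the promotion from quasi embedding to embedding: initiality by itself yields only quasi embeddings, and without Lemma~\ref{lem:embeddings-preserved} one cannot conclude that each component $\eta_X$ of the sought natural transformation is an order isomorphism. Once that is in place, the remaining work (invertibility and naturality) is a clean bookkeeping exercise in the uniqueness clause of initiality.
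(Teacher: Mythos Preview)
Your argument is correct, and for naturality it is actually cleaner than the paper's. The paper obtains each $\eta_X$ from the standard ``initial objects are unique up to isomorphism'' argument, but then proves naturality by an explicit induction on the height $h^0_X(s)$, splitting into the cases $s=\iota^0_X(x)$ and $s=\kappa^0_X(\sigma)$. Your route---recognising $(\T^1W(Y),\iota^1_Y\circ f,\kappa^1_Y)$ as a Kruskal fixed point of $W$ over $X$ and invoking the uniqueness clause of initiality to identify $\eta_Y\circ\T^0W(f)$ with $\T^1W(f)\circ\eta_X$---replaces that induction by a two-line computation and is the more categorical proof.

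One small remark: your detour through Lemma~\ref{lem:embeddings-preserved} is unnecessary, and your closing claim that ``without Lemma~\ref{lem:embeddings-preserved} one cannot conclude that each component $\eta_X$ \dots\ is an order isomorphism'' is not quite right. You already showed that $\eta_X$ is a bijection whose inverse $\eta'_X$ is a quasi embedding. But a bijection $f$ whose inverse is a quasi embedding is automatically an embedding: applying the quasi-embedding property of $f^{-1}$ to $f(s),f(t)$ gives $s\leq t\Rightarrow f(s)\leq f(t)$. So the mutual-inverse argument alone already yields an order isomorphism; Lemma~\ref{lem:embeddings-preserved} adds nothing here. This is presumably also why the paper does not invoke it at this point.
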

\begin{proof}
 For each order~$X$, the fact that $(\T^0W(X),\iota^0_X,\kappa^0_X)$ and $(\T^1W(X),\iota^1_X,\kappa^1_X)$ are initial Kruskal fixed points of~$W$ over~$X$ implies that there is an isomorphism $\eta_X:\T^0W(X)\to\T^1W(X)$ with $\eta_X\circ\iota^0_X=\iota^1_X$ and $\eta_X\circ\kappa^0_X=\kappa^1_X\circ W(\eta_X)$. It remains to show that the resulting family~$\eta$ is natural. Given a quasi embedding $f:X\to Y$ between partial orders, we show
 \begin{equation*}
  \eta_Y\circ\T^0W(f)(s)=\T^1W(f)\circ\eta_X(s)
 \end{equation*}
 by induction on $h^0_X(s)$, where $h^0_X:\T^0W(X)\to\mathbb N$ is as in part~(i) of Theorem~\ref{thm:initial-condition}. To cover elements of the form $s=\iota^0_X(x)$, it suffices to observe
 \begin{equation*}
  \eta_Y\circ\T^0W(f)\circ\iota^0_X=\eta_Y\circ\iota^0_Y\circ f=\iota^1_Y\circ f=\T^1W(f)\circ\iota^1_X=\T^1W(f)\circ\eta_X\circ\iota^0_X.
 \end{equation*}
 Given an element $s=\kappa^0_X(\sigma)$, we invoke Lemma~\ref{lem:normal-form} to write $\sigma\nf W(\iota_a)(\sigma_0)$, where $\iota_a:a\hookrightarrow\T^0W(X)$ is the inclusion. For any  element $s'\in a=\supp^W_{\T^0W(X)}(\sigma)$ we have $h^0_X(s')<h^0_X(s)$, so that the induction hypothesis yields
 \begin{equation*}
  \eta_Y\circ\T^0W(f)\circ\iota_a=\T^1W(f)\circ\eta_X\circ\iota_a.
 \end{equation*}
 We can deduce
 \begin{multline*}
  \eta_Y\circ\T^0W(f)(s)=\eta_Y\circ\T^0W(f)\circ\kappa^0_X(\sigma)=\eta_Y\circ\kappa^0_Y\circ W(\T^0W(f))(\sigma)=\\
  =\kappa^1_Y\circ W(\eta_Y)\circ W(\T^0W(f)\circ\iota_a)(\sigma_0)=\kappa^1_Y\circ W(\T^1W(f))\circ W(\eta_X\circ\iota_a)(\sigma_0)=\\
  =\T^1W(f)\circ\kappa^1_X\circ W(\eta_X)(\sigma)=\T^1W(f)\circ\eta_X\circ\kappa^0_X(\sigma)=\T^1 W(f)\circ\eta_X(s),
 \end{multline*}
 as required.
\end{proof}

Given a normal PO-dilator~$W$, we will write $\T W$ for ``its" Kruskal derivative, even though the latter is only determined up to isomorphism. The following is an immediate consequence of Proposition~\ref{prop:derivative-preserves-wpos}.

\begin{corollary}\label{cor:derivative-wpo}
If $W$ is a normal WPO-dilator, then its Kruskal derivative~$\T W$ is a normal WPO-dilator as well.
\end{corollary}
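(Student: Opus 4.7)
The plan is to reduce directly to Proposition~\ref{prop:derivative-preserves-wpos}. By Theorem~\ref{thm:deriv-exists}, a Kruskal derivative $\T W$ exists and is by construction a normal PO-dilator, so the only remaining obligation is the WPO-dilator condition: that $\T W(X)$ is a well partial order whenever $X$ is.

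For a given wpo~$X$, I would point out that the proof of Theorem~\ref{thm:deriv-exists} uses the initial Kruskal fixed point supplied by Corollary~\ref{cor:fixed-points-exist}, which in turn is the concrete term system from Definition~\ref{def:TW(X)}. Proposition~\ref{prop:derivative-preserves-wpos} applies verbatim to that term system and yields that its underlying partial order is a wpo.

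For any other representative of the Kruskal derivative, Theorem~\ref{thm:derivs-unique} provides a natural isomorphism to the one just considered, whose components are in particular order-isomorphisms between the underlying partial orders~$\T W(X)$. Since being a well partial order is invariant under order-isomorphism, the wpo property transfers to every representative. There is no genuine obstacle at this stage; all the substantive work, in particular the minimal bad sequence argument requiring $\Pi^1_1$-comprehension, has already been carried out inside Proposition~\ref{prop:derivative-preserves-wpos}, and the present corollary is simply the packaging of that fact into the language of normal WPO-dilators.
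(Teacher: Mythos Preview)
Your proposal is correct and matches the paper's own argument: the paper states the corollary as an immediate consequence of Proposition~\ref{prop:derivative-preserves-wpos}, and your write-up simply spells out the details (including the isomorphism-invariance observation, which the paper leaves implicit after remarking that $\T W$ is only determined up to natural isomorphism). There is nothing to add.
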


In the following sections we will consider iterated Kruskal derivatives. To ensure that the iterations are essentially unique, we now show that equivalent PO-dilators have equivalent Kruskal derivatives.

\begin{proposition}\label{prop:isomorphic-iterates}
 Consider a natural isomorphism $\eta:V\Rightarrow W$ between normal PO-dilators. If $(\T W,\iota,\kappa)$ is a Kruskal derivative of $W$, then $(\T W,\iota,\kappa\circ\eta)$ is a Kruskal derivative of~$V$, where $\kappa\circ\eta$ is defined by $(\kappa\circ\eta)_X=\kappa_X\circ\eta_{\T W(X)}$.
\end{proposition}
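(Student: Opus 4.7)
The plan is to verify directly the two clauses of Definition~\ref{def:kruskal-deriv}, using throughout that $\eta$ is a natural isomorphism and hence, by the discussion preceding Theorem~\ref{thm:derivs-unique}, satisfies $\supp^W_X\circ\eta_X=\supp^V_X$ on every partial order~$X$. This support-preservation is the one nontrivial ingredient; everything else is essentially bookkeeping.

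First I would check that $(\T W(X),\iota_X,(\kappa\circ\eta)_X)$ is a Kruskal fixed point of $V$ over $X$ in the sense of Definition~\ref{def:Kruskal-fixed-point}. Since $\eta_{\T W(X)}:V(\T W(X))\to W(\T W(X))$ is a bijection, we have $\rng((\kappa\circ\eta)_X)=\rng(\kappa_X)$, so the disjointness $\rng(\iota_X)\cap\rng((\kappa\circ\eta)_X)=\emptyset$ follows from the corresponding fact for~$W$. The four inequality clauses transfer routinely: for example, $\iota_X(x)\leq_{\T W(X)}(\kappa\circ\eta)_X(\tau)=\kappa_X(\eta_{\T W(X)}(\tau))$ is equivalent to $\iota_X(x)\leqf_{\T W(X)}\supp^W_{\T W(X)}(\eta_{\T W(X)}(\tau))=\supp^V_{\T W(X)}(\tau)$, and similarly for the final clause one uses that $\eta_{\T W(X)}$ is an order isomorphism to replace $\eta(\sigma)\leq_{W(\T W(X))}\eta(\tau)$ by $\sigma\leq_{V(\T W(X))}\tau$.

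Next I would verify initiality via the criterion of Theorem~\ref{thm:initial-condition}(i). The condition on $\iota_X$ is inherited from the $W$-derivative. The range condition $\rng(\iota_X)\cup\rng((\kappa\circ\eta)_X)=\T W(X)$ again reduces to the case of $W$ since $\rng((\kappa\circ\eta)_X)=\rng(\kappa_X)$. For the height function, the very same $h:\T W(X)\to\mathbb N$ that witnesses initiality of $(\T W(X),\iota_X,\kappa_X)$ works: for $s\in\supp^V_{\T W(X)}(\sigma)=\supp^W_{\T W(X)}(\eta_{\T W(X)}(\sigma))$ we obtain $h(s)<h(\kappa_X(\eta_{\T W(X)}(\sigma)))=h((\kappa\circ\eta)_X(\sigma))$, as required.

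Finally I would check the naturality condition~(ii) of Definition~\ref{def:kruskal-deriv} for the family $\kappa\circ\eta$; naturality of $\iota$ is unchanged. Given a quasi embedding $f:X\to Y$, we chain the naturality of $\kappa$ (from the given $W$-derivative) with the naturality of~$\eta$:
\begin{equation*}
\T W(f)\circ(\kappa\circ\eta)_X=\T W(f)\circ\kappa_X\circ\eta_{\T W(X)}=\kappa_Y\circ W(\T W(f))\circ\eta_{\T W(X)}=\kappa_Y\circ\eta_{\T W(Y)}\circ V(\T W(f)),
\end{equation*}
and the right-hand side is $(\kappa\circ\eta)_Y\circ V(\T W(f))$. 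Since the statement does not require any new functor structure on $\T W$ (we reuse the one from the $W$-derivative), there is no real obstacle; the only point that needs care is invoking support-preservation for~$\eta$ when transferring the clauses involving $\supp$.
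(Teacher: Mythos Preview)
Your proposal is correct and follows essentially the same approach as the paper: verify the Kruskal fixed point clauses using $\supp^W_X\circ\eta_X=\supp^V_X$, then establish condition~(ii) of Definition~\ref{def:kruskal-deriv} by exactly the naturality chain you wrote. The paper merely says the initiality verification is ``straightforward'' without spelling it out, whereas you make explicit the use of Theorem~\ref{thm:initial-condition}(i) via the inherited height function; this is the natural way to fill in that step and matches the paper's intent.
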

\begin{proof}
 It is straightforward to verify that $(\T W(X),\iota_X,(\kappa\circ\eta)_X)$ is an initial Kruskal fixed point of~$V$ over~$X$, for any partial order~$X$. To provide a representative part of the verification, we show that $(\kappa\circ\eta)_X(\sigma)\leq_{\T W(X)}(\kappa\circ\eta)_X(\tau)$ is equivalent to
 \begin{equation*}
  \sigma\leq_{V(\T W(X))}\tau\quad\text{or}\quad(\kappa\circ\eta)_X(\sigma)\leqf_{\T W(X)}\supp^V_{\T W(X)}(\tau),
 \end{equation*}
 as required by Definition~\ref{def:Kruskal-fixed-point}. Since $(\T W(X),\iota_X,\kappa_X)$ is a Kruskal fixed point of~$W$ over~$X$, the same definition entails that $(\kappa\circ\eta)_X(\sigma)\leq_{\T W(X)}(\kappa\circ\eta)_X(\tau)$ is equivalent to the disjunction of $\eta_{\T W(X)}(\sigma)\leq_{W(\T W(X))}\eta_{\T W(X)}(\tau)$ and
 \begin{equation*}
 (\kappa\circ\eta)_X(\sigma)\leqf_{\T W(X)}\supp^W_{\T W(X)}(\eta_{\T W(X)}(\tau)).
 \end{equation*}
 The first disjunct is equivalent to $\sigma\leq_{V(\T W(X))}\tau$. To relate the second disjuncts, it suffices to recall that we have
 \begin{equation*}
  \supp^W_{\T W(X)}(\eta_{\T W(X)}(\tau))=\supp^V_{\T W(X)}(\tau).
 \end{equation*}
 To conclude that $(\T W,\iota,\kappa\circ\eta)$ is a Kruskal derivative in the sense of Definition~\ref{def:kruskal-deriv}, we compute
 \begin{multline*}
  \T W(f)\circ(\kappa\circ\eta)_X=\T W(f)\circ\kappa_X\circ\eta_{\T W(X)}=\kappa_Y\circ W(\T W(f))\circ\eta_{\T W(X)}=\\
  =\kappa_Y\circ\eta_{\T W(Y)}\circ V(\T W(f))=(\kappa\circ\eta)_Y\circ V(\T W(f))
 \end{multline*}
 for an arbitrary quasi embedding $f:X\to Y$.
\end{proof}

\section{The gap orders as PO-dilators}\label{sect:gap-orders-dilators}

In the present section we give a recursive definition of the set of finite trees with labels in~$\{0,\dots,n-1\}$, ordered according to Friedman's gap condition. We also show that one obtains a normal PO-dilator if one relativizes the gap orders to a given partial order~$X$. This prepares the reconstruction of Friedman's gap condition in the following section.

As a preparation, we give a more precise account of finite multisets: Let us write~$X^{<\omega}$ for the set of finite sequences $\langle x_0,\dots,x_{n-1}\rangle$ with entries~$x_i\in X$. Say that two sequences $\langle x_0,\dots,x_{m-1}\rangle$ and $\langle y_0,\dots,y_{n-1}\rangle$ in $X^{<\omega}$ are equivalent if, and only if, there is a bijective function $h:\{0,\dots,m-1\}\to\{0,\dots,n-1\}$ such that we have $x_i=y_{h(i)}$ for all $i<m=n$. We write $[x_0,\dots,x_{n-1}]$ for the equivalence class of~$\langle x_0,\dots,x_{n-1}\rangle$ with respect to this equivalence relation. From $[x_0,\dots,x_{n-1}]$ one can recover the multiplicity but not the order of the entries. The quotient set
\begin{equation*}
 M(X)=\{[x_0,\dots,x_{n-1}]\,|\,\langle x_0,\dots,x_{n-1}\rangle\in X^{<\omega}\}
\end{equation*}
is called the set of finite multisets with elements from~$X$. We declare that
\begin{equation*}
 [x_0,\dots,x_{m-1}]\leq_{M(X)}[y_0,\dots,y_{n-1}]
\end{equation*}
holds if, and only if, there is an injection $g:\{0,\dots,m-1\}\to\{0,\dots,n-1\}$ such that we have $x_i\leq_X y_{g(i)}$ for all $i<m$. One can check that this is well defined and yields a partial order on $M(X)$ (for antisymmetry, use induction on the number of elements). Higman's lemma entails that $M(X)$ is a well partial order if the same holds for~$X$. Given a (quasi) embedding $f:X\to Y$, one can define a (quasi) embedding $M(f):M(X)\to M(Y)$ by setting
\begin{equation*}
 M(f)([x_0,\dots,x_{n-1}])=[f(x_0),\dots,f(x_{n-1})].
\end{equation*}
A family of functions $\supp^M_X:M(X)\to[X]^{<\omega}$ can be given by
\begin{equation*}
 \supp^M_X([x_0,\dots,x_{n-1}])=\{x_0,\dots,x_{n-1}\}.
\end{equation*}
It is straightforward to check that this turns $M$ into a normal WPO-dilator in the sense of Definitions~\ref{def:po-dilator} and~\ref{def:normal}. 

Given a partial order~$X$, the underlying set of the partial order~$\ot_n(X)$ consists of the finite trees with labels in $\{0,\dots,n-1\}\cup X$, where labels from~$X$ may only occur at the leafs. More formally, this set admits the following recursive description:

\begin{definition}
Given a number~$n\in\mathbb N$ and a partial order~$X$, we generate a set $\ot_n(X)$ by the following recursive clauses:
\begin{enumerate}[label=(\roman*)]
\item For each $x\in X$ we have an element $\overline x\in\ot_n(X)$.
\item Whenever we have constructed an element $\sigma=[t_0,\dots,t_{m-1}]\in M(\ot_n(X))$, we add an element $i\star\sigma\in\ot_n(X)$ for each natural number~$i<n$.
\end{enumerate}
Let us also define
\begin{equation*}
\ot_n^-(X)=\{\overline x\,|\,x\in X\}\cup\{0\star\sigma\,|\,\sigma\in M(\ot_n(X))\}\subseteq\ot_n(X),
\end{equation*}
provided that we have~$n>0$.
\end{definition}

We define height functions $h_X^n:\ot_n(X)\to\mathbb N$ by the recursive clauses
\begin{equation*}
h_X^n(\overline x)=0,\qquad h_X^n(i\star[t_0,\dots,t_{m-1}])=\max(\{0\}\cup\{h_X^n(t_k)+1\,|\,k<m\}).
\end{equation*}
The following definition decides $s\leq_{\ot_n(X)}t$ by recursion on $h_X^n(s)+h_X^n(t)$.

\begin{definition}\label{def:gap-order}
To define a binary relation $\leq_{\ot_n(X)}$ on the set $\ot_n(X)$ we stipulate
\begin{alignat*}{3}
\overline x&\leq_{\ot_n(X)}t\quad&&\Leftrightarrow\quad&&
\begin{cases}
\text{either }t=\overline y\text{ with }x\leq_X y,\\
\text{or }t=j\star[t_0,\dots,t_{m-1}]\text{ and }\overline x\leq_{\ot_n(X)}t_l\text{ for some $l<m$},
\end{cases}\\
i\star\sigma&\leq_{\ot_n(X)}t\quad&&\Leftrightarrow\quad&&
\begin{cases}
t=i\star\tau\text{ with }\sigma\leq_{M(\ot_n(X))}\tau,\text{ or }t=j\star[t_0,\dots,t_{m-1}]\\
\text{with }j\geq i\text{ and }i\star\sigma\leq_{\ot_n(X)}t_l\text{ for some $l<m$}.
\end{cases}
\end{alignat*}
In the case of $n>0$, we define $\leq_{\ot_n^-(X)}$ as the restriction of $\leq_{\ot_n(X)}$ to $\ot_n^-(X)$.
\end{definition}

A straightforward induction shows
\begin{equation*}
s\leq_{\ot_n(X)}t\quad\Rightarrow\quad h_X^n(s)\leq h_X^n(t).
\end{equation*}
Similarly to the proof of Proposition~\ref{prop:fixed-points-partial-order}, one can deduce that $\leq_{\ot_n(X)}$ is a partial order on~$\ot_n(X)$. In the introduction we have given the usual definition of Friedman's gap condition for embeddings of $n$-trees. The following shows that the recursive clauses from Definition~\ref{def:gap-order} yield the same result. We assume that isomorphic \mbox{$n$-trees} are identified.

\begin{proposition}\label{prop:gap-orders}
The partial order~$\ot_n(\emptyset)$ is isomorphic to the set of $n$-trees, ordered according to Friedman's strong gap condition.
\end{proposition}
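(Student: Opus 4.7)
The plan is to define a bijection $\phi_n$ from $\ot_n(\emptyset)$ onto the set of isomorphism classes of finite $n$-trees and then verify that it preserves and reflects the order. Since $\emptyset$ has no elements, every member of $\ot_n(\emptyset)$ has the form $i\star[t_0,\dots,t_{m-1}]$; set $\phi_n(i\star[t_0,\dots,t_{m-1}])$ to be the $n$-tree whose root carries label~$i$ and whose immediate subtrees are $\phi_n(t_0),\dots,\phi_n(t_{m-1})$. A routine induction on $h_\emptyset^n$ shows that $\phi_n$ is well defined on the multiset quotient (because the immediate subtrees of the root of an $n$-tree form an unordered collection up to isomorphism) and is a bijection.

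The substance of the argument is an induction on $h_\emptyset^n(s)+h_\emptyset^n(t)$ showing that $s\leq_{\ot_n(\emptyset)}t$ is equivalent to the existence of an embedding $\phi_n(s)\to\phi_n(t)$ satisfying Friedman's strong gap condition. In the direction $(\Rightarrow)$, the two clauses of Definition~\ref{def:gap-order} translate directly. If $s=i\star\sigma$, $t=i\star\tau$, and $\sigma\leq_{M(\ot_n(\emptyset))}\tau$ via an injection~$g$ on entries, then inductively obtained gap embeddings of each immediate subtree of $\phi_n(s)$ into the $g$-matched immediate subtree of $\phi_n(t)$ assemble into a single root-to-root embedding, for which conditions~(ii) and~(iii) hold trivially at the root and are inherited from the subembeddings elsewhere. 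If instead $t=j\star[t_0,\dots,t_{m-1}]$ with $j\geq i$ and $s\leq_{\ot_n(\emptyset)}t_l$, then an inductively obtained gap embedding $\phi_n(s)\to\phi_n(t_l)$ composes with the inclusion of $\phi_n(t_l)$ as an immediate subtree of $\phi_n(t)$; the new initial segment of the (iii)-path consists just of the single label $j\geq i$ at the new root, so~(iii) continues to hold.

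For $(\Leftarrow)$, let $f\colon\phi_n(s)\to\phi_n(t)$ be a gap embedding with $s=i\star\sigma$ and $t=j\star\tau$, and split on whether $f$ sends the root of $\phi_n(s)$ to the root of $\phi_n(t)$. In the positive case, condition~(i) forces $i=j$; for each entry $s'$ of $\sigma$, the image under $f$ of the root of $\phi_n(s')$ lies in the subtree $\phi_n(t')$ rooted at a unique immediate successor of the root of $\phi_n(t)$, where $t'$ is an entry of $\tau$, and the resulting assignment $s'\mapsto t'$ is injective because $f$ preserves meets while distinct entries' roots in $\phi_n(s)$ meet at the root of $\phi_n(s)$. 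The restriction $f\!\restriction\!\phi_n(s')\colon\phi_n(s')\to\phi_n(t')$ is a gap embedding---its clause~(iii) is precisely what condition~(ii) of $f$ asserts about the labels strictly between the root of $\phi_n(t)$ and the image of the root of $\phi_n(s')$---so the induction hypothesis yields $s'\leq_{\ot_n(\emptyset)}t'$ for each $s'$, whence $\sigma\leq_{M(\ot_n(\emptyset))}\tau$ and clause~1 applies. In the negative case, $f$ lands entirely inside a single immediate subtree $\phi_n(t_l)$ of $\phi_n(t)$; condition~(iii) of $f$ at the root of $\phi_n(t)$ yields $j\geq i$, the corestriction $\phi_n(s)\to\phi_n(t_l)$ is again a gap embedding, and the induction hypothesis supplies $s\leq_{\ot_n(\emptyset)}t_l$, matching clause~2.

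I expect the main difficulty to be the careful bookkeeping of condition~(iii) at the ``seam'' between the ambient tree and a chosen immediate subtree: one must verify that the boundary label (namely the root label $j\geq i$ when extending, or the label of the chosen immediate subtree of $\phi_n(t)$ when restricting) glues correctly with the (iii)-clause of the restricted or extended embedding, and, in the $(\Leftarrow)$ positive case, that clause~(ii) of the original embedding supplies exactly the labels needed for clause~(iii) of each restricted subembedding. Once this bookkeeping is in place, the induction runs smoothly and delivers the claimed isomorphism between $\ot_n(\emptyset)$ and the $n$-trees under Friedman's strong gap condition.
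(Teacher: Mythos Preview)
Your proposal is correct and follows essentially the same approach as the paper: define the bijection recursively by sending $i\star[t_0,\dots,t_{m-1}]$ to the $n$-tree with root label~$i$ and immediate subtrees given by the images of the~$t_l$, then prove the order equivalence by induction on the sum of heights, splitting on the two clauses of Definition~\ref{def:gap-order} in one direction and on whether the root maps to the root in the other. Your identification of the crucial bookkeeping point---that clause~(ii) of the ambient embedding becomes clause~(iii) of each restricted subembedding, and conversely---is exactly the observation the paper highlights as the reason the \emph{strong} gap condition is needed here.
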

\begin{proof}
For~$s=i\star[s(0),\dots,s(k-1)]\in\ot_n(\emptyset)$ we recursively define $T_s$ as the $n$-tree with root label~$i$ and immediate subtrees $T_{s(0)},\dots,T_{s(k-1)}$. It is clear that this yields a bijection. By induction on $h_X^n(s)+h_X^n(t)$ one can show that $s\leq_{\ot_n(\emptyset)}t$ holds if, and only if, there is an embedding $f:T_s\to T_t$ that satisfies Friedman's gap condition. An inequality
\begin{equation*}
s=i\star\sigma=i\star[s(0),\dots,s(k-1)]\leq_{\ot_n(\emptyset)}i\star[t(0),\dots,t(m-1)]=i\star\tau=t
\end{equation*}
that holds because of $\sigma\leq_{M(\ot_n(X))}\tau$ corresponds to an embedding $f:T_s\to T_t$ that maps the root to the root. Indeed, the inequalities $s(j)\leq_{\ot_n(\emptyset)}t(l_j)$ that witness $\sigma\leq_{M(\ot_n(X))}\tau$ correspond to the restrictions
\begin{equation*}
f_j=f\!\restriction\! T_{s(j)}:T_{s(j)}\to T_{t(l_j)}\subseteq T_t.
\end{equation*}
At this point it is crucial that we consider the strong gap condition: Writing $\rt(T)$ for the root of~$T$, the gap below $f_j(\rt(T_{s(j)}))\in T_{t(l_j)}$ corresponds to the gap between $f(\rt(T_s))$ and $f(\rt(T_{s(j)}))$ in $T_s$. An inequality
\begin{equation*}
s=i\star\sigma\leq_{\ot_n(X)}j\star[t_0,\dots,t_{m-1}]=t
\end{equation*}
that holds because of $j\geq i$ and $s\leq_{\ot_n(X)}t_l$ with $l<m$ corresponds to an embedding $f:T_s\rightarrow T_t$ with range contained in $T_{t(l)}\subseteq T_t$. The condition $j\geq i$ accounts for the fact that $\rt(T_t)$ lies in the gap below $f(\rt(T_s))$ in $T_t$ but not in $T_{t(l)}$.
\end{proof}

Our next goal is to extend $\ot_n$ and $\ot_{n+1}^-$ into PO-dilators.

\begin{definition}
Given a quasi embedding $f:X\to Y$ between partial orders, we define $\ot_n(f):\ot_n(X)\to\ot_n(Y)$ by the recursive clauses
\begin{equation*}
\ot_n(f)(\overline x)=\overline{f(x)},\qquad\ot_n(f)(i\star[t_0,\dots,t_{m-1}])=i\star[\ot_n(f)(t_0),\dots,\ot_n(f)(t_{m-1})].
\end{equation*}
For $n>0$ we observe that $\ot_n(f)$ restricts to $\ot_n^-(f):\ot_n^-(X)\to\ot_n^-(Y)$. We also define a family of functions $\supp^{\ot_n}_X:\ot_n(X)\to[X]^{<\omega}$ by stipulating
\begin{equation*}
\supp^{\ot_n}_X(\overline x)=\{x\},\qquad\supp^{\ot_n}_X(i\star[t_0,\dots,t_{m-1}])=\bigcup\{\supp^{\ot_n}_X(t_l)\,|\,l<m\}.
\end{equation*}
We will write $\supp^{\ot_n^-}_X$ for the restriction of $\supp^{\ot_n}_X$ to $\ot_n^-(X)$.
\end{definition}

Let us verify that we obtain the desired structure:

\begin{proposition}\label{prop:gap-dilators}
The previous definition yields normal PO-dilators~$\ot_n$ and $\ot_{n+1}^-$.
\end{proposition}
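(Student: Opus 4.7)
The plan is to verify, one by one, the clauses of Definitions \ref{def:po-dilator} and \ref{def:normal} for $\ot_n$, and then observe that all verifications restrict to $\ot_{n+1}^-$ because the latter is a full suborder cut out by a condition on the outermost symbol. Almost every step is a structural induction on $h^n_X$ (or on $h^n_X(s)+h^n_X(t)$), reading off the recursive clauses of Definition \ref{def:gap-order}.

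First I would check that $\ot_n$ is a functor $\po\to\po$ preserving embeddings. Functoriality, i.e.\ $\ot_n(\id_X)=\id_{\ot_n(X)}$ and $\ot_n(g\circ f)=\ot_n(g)\circ\ot_n(f)$, is a straightforward induction on height from the recursive clauses. Next, given a quasi embedding $f:X\to Y$, one proves by induction on $h^n_X(s)+h^n_X(t)$ that
\begin{equation*}
\ot_n(f)(s)\leq_{\ot_n(Y)}\ot_n(f)(t)\quad\Rightarrow\quad s\leq_{\ot_n(X)} t,
\end{equation*}
splitting into the cases in Definition~\ref{def:gap-order}. The key point is that an inequality $\ot_n(f)(s)\leq_{\ot_n(Y)}\ot_n(f)(t)$ must be witnessed by a clause whose outermost symbols match those of $s$ and $t$ (since $\ot_n(f)$ preserves root labels and the tag $\overline{\cdot}$ versus $i\star\cdot$), so one can apply the inductive hypothesis component-wise and use that $f$ is a quasi embedding at the leaf case $s=\overline x$, $t=\overline y$. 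When $f$ is in addition an embedding, reading the same induction in reverse yields that $\ot_n(f)$ is an embedding.

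Next I would prove naturality of $\supp^{\ot_n}$, i.e.\ $\supp^{\ot_n}_Y\circ\ot_n(f)=[f]^{<\omega}\circ\supp^{\ot_n}_X$, by an immediate induction on height using both recursive definitions. The support condition of Definition~\ref{def:po-dilator}(ii), $\supp^{\ot_n}_Y(t)\subseteq\rng(f)\Rightarrow t\in\rng(\ot_n(f))$ for an embedding $f:X\to Y$, is again by induction on $h^n_Y(t)$: at a leaf $t=\overline y$ one has $y\in\rng(f)$, so $y=f(x)$ and $t=\ot_n(f)(\overline x)$; at $t=i\star[t_0,\dots,t_{m-1}]$ the support of each $t_l$ is contained in $\supp^{\ot_n}_Y(t)\subseteq\rng(f)$, so by IH each $t_l=\ot_n(f)(s_l)$, and $t=\ot_n(f)(i\star[s_0,\dots,s_{m-1}])$.

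Normality is then verified by induction on $h^n_X(s)+h^n_X(t)$, splitting on which clause of Definition~\ref{def:gap-order} witnesses $s\leq_{\ot_n(X)}t$. For the leaf case one reads $\{x\}\leqf_X\{y\}$ off of $x\leq_X y$; in the case $t=j\star[t_0,\dots,t_{m-1}]$ and $s\leq_{\ot_n(X)}t_l$ the IH gives $\supp^{\ot_n}_X(s)\leqf_X\supp^{\ot_n}_X(t_l)\subseteq\supp^{\ot_n}_X(t)$; in the case $s=i\star\sigma\leq i\star\tau=t$ with $\sigma\leq_{M(\ot_n(X))}\tau$ one uses the injection $g$ witnessing this to match up the components and apply the IH. Finally, for $\ot_{n+1}^-$ I would observe that $\ot_{n+1}^-$ is a full suborder of $\ot_{n+1}$ defined by a condition on the outermost symbol (either $\overline{\cdot}$ or $0\star\cdot$), that $\ot_{n+1}^-(f)$ is just the restriction of $\ot_{n+1}(f)$ (whose image stays inside $\ot_{n+1}^-(Y)$), and that $\supp^{\ot_{n+1}^-}_X$ is the restriction of $\supp^{\ot_{n+1}}_X$; hence functoriality, quasi embedding and embedding preservation, naturality, and normality are all inherited. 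For the support condition, if $t\in\ot_{n+1}^-(Y)$ and $\supp^{\ot_{n+1}^-}_Y(t)\subseteq\rng(f)$, the corresponding statement for $\ot_{n+1}$ produces a preimage $s\in\ot_{n+1}(X)$; since the outermost symbol of $s$ coincides with that of $t$, we have $s\in\ot_{n+1}^-(X)$. The main technical obstacle is bookkeeping in the inductive cases of the quasi-embedding and normality arguments, where one must exhaust all the combinations of outermost-symbol types of $s$ and $t$ appearing in Definition~\ref{def:gap-order}; but each case reduces cleanly to the inductive hypothesis together with the analogous property for the multiset dilator $M$.
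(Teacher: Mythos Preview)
Your proposal is correct and follows essentially the same approach as the paper: the same inductions on $h^n_X(s)+h^n_X(t)$ for the quasi-embedding property and for normality, on height for functoriality and naturality, and on $t$ for the support condition. Your observation for the $\ot_{n+1}^-$ support condition---that the $\ot_{n+1}$-preimage of an element of $\ot_{n+1}^-(Y)$ must lie in $\ot_{n+1}^-(X)$ because $\ot_{n+1}(f)$ preserves the outermost symbol---is exactly the point the paper isolates.
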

\begin{proof}
Given a quasi embedding~$f$, an easy induction on $h_X^n(s)+h_X^n(t)$ shows
\begin{equation*}
\ot_n(f)(s)\leq_{\ot_n(Y)}\ot_n(f)(t)\quad\Rightarrow\quad s\leq_{\ot_n(X)}t.
\end{equation*}
If $f$ is an embedding, then the converse implication holds as well. Also by induction, one readily checks that $\ot_n$ is a functor and that $\supp^{\ot_n}$ is a natural transformation. To conclude that $\ot_n$ a PO-dilator, one needs to establish the support condition from part~(ii) of Definition~\ref{def:po-dilator}. By induction on~$s$, one can indeed show
\begin{equation*}
\supp^{\ot_n}_Y(s)\subseteq\rng(f)\quad\Rightarrow\quad s\in\rng(\ot_n(f))
\end{equation*}
for $s\in\ot_n(Y)$, where $f:X\to Y$ is an embedding (recall that the converse implication is automatic). To see that $\ot_{n+1}^-$ does also satisfy the support condition, one should observe that $\ot_{n+1}(f)(s)\in\ot_{n+1}^-(Y)$ implies $s\in\ot_{n+1}^-(X)$. To establish the normality condition from Definition~\ref{def:normal}, one verifies
\begin{equation*}
s\leq_{\ot_n(X)}t\quad\Rightarrow\quad\supp^{\ot_n}_X(s)\leqf_X\supp^{\ot_n}_X(t)
\end{equation*}
by induction on $h_X^n(s)+h_X^n(t)$.
\end{proof}

Corollary~\ref{cor:T_n-wpo} below will establish the stronger result that $\ot_n$ and $\ot_{n+1}^-$ are normal WPO-dilators. In view of Proposition~\ref{prop:gap-orders} this implies that the trees with Friedman's gap condition form a well partial order. To prove this fact one needs iterated applications of~$\Pi^1_1$-comprehension.

\section{Reconstructing the gap condition}

In the introduction we have sketched the reconstruction of Friedman's gap condition in terms of iterated Kruskal derivatives. The reader may wish to recall steps~(1) to~(4) from the introduction, which describe a recursive construction of normal WPO-dilators~$\ot_n$ and $\ot_{n+1}^-$. We now show that the latter are unique up to natural isomorphism: Inductively, we may assume that this is the case for $\ot_n$ and hence for $M\circ\ot_n$ (see below for the composition of PO-dilators). Theorem~\ref{thm:derivs-unique} and Proposition~\ref{prop:isomorphic-iterates} ensure that $\ot_{n+1}^-$, which is the Kruskal derivative of $M\circ\ot_n$, is unique up to natural isomorphism as well. Finally, the same holds for the composition $\ot_{n+1}=\ot_n\circ\ot_{n+1}^-$. The recursive construction via steps~(1) to~(4) may seem at odds with the ad hoc definition of $\ot_n$ and $\ot_{n+1}^-$ in the previous section. However, this objection is easily resolved: In the following we will show that the PO-dilators~$\ot_n$ and $\ot_{n+1}^-$ from the previous section are related as specified by steps~(1) to~(4) from the introduction. Due to uniqueness, this means that our ad hoc definition coincides with the result of the recursive construction.

Let us first observe that the normal WPO-dilator~$\ot_0$ from the previous section is equivalent to the identity functor on the category of partial orders. Hence step~(1) from the introduction is satisfied, at least up to natural isomorphism. In Proposition~\ref{prop:gap-orders} we have shown that $\ot_n(\emptyset)$ is isomorphic to the set of $n$-trees with Friedman's strong gap condition, as claimed by step~(4). Our next goal is to verify step~(3) from the introduction, which requires that $\ot_{n+1}$ is equivalent to $\ot_n\circ\ot_{n+1}^-$. Let us first discuss the composition of dilators in general: To compose PO-dilators $V$ and~$W$ one first takes their composition as functors. In order to obtain a PO-dilator, one defines a family of functions $\supp^{V\circ W}_X:V\circ W(X)\to[X]^{<\omega}$ by setting
\begin{equation*}
\supp^{V\circ W}_X(\sigma)=\bigcup\{\supp^W_X(s)\,|\,s\in\supp^V_{W(X)}(\sigma)\}.
\end{equation*}
If $V$ and $W$ are WPO-dilators, then so is $V\circ W$. One readily checks that $V\circ W$ is normal if the same holds for~$V$ and~$W$. As explained in Section~\ref{sect:kruskal-derivative}, two PO-dilators are equivalent if they are equivalent as functors. One can verify that $V\circ W$ is equivalent to $V'\circ W'$ if $V$ is equivalent to $V'$ and $W$ is equivalent to $W'$. To realize step~(3), we will show that the following defines an equivalence.

\begin{definition}
For each partial order~$X$, we define~$\pi^n_X:\ot_n\circ\ot_{n+1}^-(X)\to\ot_{n+1}(X)$ by the recursive clauses
\begin{equation*}
\pi^n_X(\overline t)=t,\qquad\pi^n_X(i\star[s_0,\dots,s_{m-1}])=(i+1)\star[\pi^n_X(s_0),\dots,\pi^n_X(s_{m-1})],
\end{equation*}
where the first clause relies on the inclusion $\ot_{n+1}^-(X)\subseteq\ot_{n+1}(X)$.
\end{definition}

Intuitively speaking, an element of $\ot_n\circ\ot_{n+1}^-(X)$ is a finite tree with labels from $\{0,\dots,n-1\}\cup\ot_{n+1}^-(X)$, where the labels from $\ot_{n+1}^-(X)$ can only occur at leafs. The function $\pi^n_X$ increases the labels from $\{0,\dots,n-1\}$ and ``unravels" the leaf labels. Hence the leafs of $s\in\ot_n\circ\ot_{n+1}^-(X)$ correspond to the minimal nodes of $\pi^n_X(s)\in\ot_{n+1}(X)$ that have a label in~$\{0\}\cup X$. It is interesting to observe that the inverse of $\pi^n_X$ is similar to the transformation $T\mapsto T^*$ from~\cite[Section~4]{simpson85}. Let us verify the promised result:

\begin{proposition}\label{prop:pi-iso}
The family $\pi^n:\ot_n\circ\ot_{n+1}^-\Rightarrow\ot_{n+1}$ is a natural isomorphism.
\end{proposition}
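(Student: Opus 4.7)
The plan is to construct an explicit inverse to $\pi^n_X$, then to show that $\pi^n_X$ reflects (as well as preserves) the order, and finally to verify naturality in~$X$. All three steps rest on the recursive decomposition of elements of $\ot_n \circ \ot_{n+1}^-(X)$, so I would use induction on the height function $h^n_{\ot_{n+1}^-(X)}$ (or on its sum, for binary statements).

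First I would define $\rho^n_X : \ot_{n+1}(X) \to \ot_n \circ \ot_{n+1}^-(X)$ by the recursive clauses
\begin{align*}
\rho^n_X(\overline x) &= \overline{\overline x}, \\
\rho^n_X(0 \star \tau) &= \overline{0 \star \tau}, \\
\rho^n_X((i+1) \star [t_0, \dots, t_{m-1}]) &= i \star [\rho^n_X(t_0), \dots, \rho^n_X(t_{m-1})] \qquad (i < n),
\end{align*}
where in the first two clauses the argument lies in $\ot_{n+1}^-(X) \subseteq \ot_{n+1}(X)$, so the outer~$\overline{\cdot}$ is well formed as an element of $\ot_n \circ \ot_{n+1}^-(X)$. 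Straightforward structural inductions then yield $\pi^n_X \circ \rho^n_X = \id$ and $\rho^n_X \circ \pi^n_X = \id$, so each $\pi^n_X$ is a bijection.

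Next I would prove the equivalence
\begin{equation*}
s \leq_{\ot_n \circ \ot_{n+1}^-(X)} s' \quad \Longleftrightarrow \quad \pi^n_X(s) \leq_{\ot_{n+1}(X)} \pi^n_X(s')
\end{equation*}
by induction on the sum of the heights of $s$ and $s'$, splitting into four cases according to the form ($\overline t$ or $i \star \sigma$) of each side. The case $\overline{t_1}$ versus~$\overline{t_2}$ reduces to $t_1 \leq_{\ot_{n+1}(X)} t_2$, since $\ot_{n+1}^-(X)$ inherits its order from~$\ot_{n+1}(X)$. For $i \star \sigma$ versus $j \star \tau$, the label shift $k \mapsto k+1$ respects both $i = j$ and $j \geq i$, while the multiset comparison transfers componentwise via the induction hypothesis and the action of~$M$ on~$\pi^n_X$. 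For $\overline t$ versus $i \star \sigma'$, the leading label $i+1 \geq 1$ on the right-hand side forces the ``matching label'' clause of Definition~\ref{def:gap-order} to fail (even when $t = 0 \star \tau$), so both sides collapse to ``$s \leq s_l'$ for some~$l$'', handled by the induction hypothesis. For $i \star \sigma$ versus $\overline t$, both sides are false: on the right, $(i+1) \star [\dots] \leq t$ cannot hold because $t$ is either a leaf~$\overline x$ (which rules out both clauses of Definition~\ref{def:gap-order}) or a root $0 \star \tau$ (where both $0 = i+1$ and $0 \geq i+1$ fail, since $i \geq 0$).

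Finally, naturality $\pi^n_Y \circ \ot_n(\ot_{n+1}^-(f)) = \ot_{n+1}(f) \circ \pi^n_X$ follows by induction on the height of~$s \in \ot_n \circ \ot_{n+1}^-(X)$: for $s = \overline t$ the claim reduces to the fact that $\ot_{n+1}^-(f)$ is the restriction of~$\ot_{n+1}(f)$, and for $s = i \star [s_0, \dots, s_{m-1}]$ it unfolds by direct computation from the recursive definitions and the induction hypothesis applied to each~$s_k$. The main obstacle is the order-reflection step, where one must verify, case by case, that the label shift $i \mapsto i+1$ absorbs exactly the room created by promoting elements of $\ot_{n+1}^-(X)$ to leaves in $\ot_n \circ \ot_{n+1}^-(X)$, so that no spurious comparisons are introduced or destroyed; once this is checked, the remainder is routine bookkeeping.
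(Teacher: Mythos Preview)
Your proposal is correct and follows essentially the same approach as the paper: the paper proves surjectivity of $\pi^n_X$ directly by induction on $t\in\ot_{n+1}(X)$ rather than writing down the explicit inverse~$\rho^n_X$, but this is a cosmetic difference, and your order-equivalence case analysis and naturality argument match the paper's almost verbatim.
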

\begin{proof}
In order to show that $\pi^n_X$ is surjective we verify $t\in\rng(\pi^n_X)$ by induction on~$t\in\ot_{n+1}(X)$. If $t$ is of the form $\overline x$ or $0\star\sigma$, then we have $t\in\ot_{n+1}^-(X)$, which yields $\overline t\in\ot_n\circ\ot_{n+1}^-(X)$ and $t=\pi^n_X(\overline t)\in\rng(\pi^n_X)$. Now consider an element of the form $t=(i+1)\star[t_0,\dots,t_{m-1}]$, with $i+1<n+1$ and $t_l\in\ot_{n+1}(X)$ for $l<m$. Inductively we get $t_l=\pi^n_X(s_l)$, which yields $i\star[s_0,\dots,s_{m-1}]\in\ot_n\circ\ot_{n+1}^-(X)$ and
\begin{equation*}
t=(i+1)\star[t_0,\dots,t_{m-1}]=\pi^n_X(i\star[s_0,\dots,s_{m-1}])\in\rng(\pi^n_X).
\end{equation*}
To conclude that $\pi^n_X$ is an isomorphism, we show
\begin{equation*}
s\leq_{\ot_n\circ\ot_{n+1}^-(X)}t\quad\Leftrightarrow\quad \pi^n_X(s)\leq_{\ot_{n+1}(X)}\pi^n_X(t)
\end{equation*}
by induction on $h^n_{\ot_{n+1}^-(X)}(s)+h^n_{\ot_{n+1}^-(X)}(t)$. For $s=\overline{s'}$ and $t=\overline{t'}$ it suffices to invoke Definition~\ref{def:gap-order}. Now consider $s=\overline{ s'}$ and $t=j\star[t_0,\dots,t_{m-1}]$. Inductively we get
\begin{equation*}
s\leq_{\ot_n\circ\ot_{n+1}^-(X)}t\quad\Leftrightarrow\quad\pi^n_X(s)\leq_{\ot_{n+1}(X)}\pi^n_X(t_l)\text{ for some $l<m$}.
\end{equation*}
Note that $\pi^n_X(s)=s'\in\ot_{n+1}^-(X)$ must be of the form $\overline x$ or $0\star\sigma$. In view of $j+1\neq 0$ and $j+1\geq 0$, the right side of the previous equivalence is thus equivalent to
\begin{equation*}
\pi^n_X(s)\leq_{\ot_{n+1}(X)}(j+1)\star[\pi^n_X(t_0),\dots,\pi^n_X(t_{m-1})]=\pi^n_X(t).
\end{equation*}
For $s=i\star[s_0,\dots,s_{k-1}]$ and $t=\overline{t'}$ we cannot have $s\leq_{\ot_n\circ\ot_{n+1}^-(X)}t$. We also see
\begin{equation*}
\pi^n_X(s)=(i+1)\star[\pi^n_X(s_0),\dots,\pi^n_X(s_{k-1})]\not\leq_{\ot_{n+1}(X)} t'=\pi^n_X(t),
\end{equation*}
since an inequality would require $t'=j\star[t_0,\dots,t_{m-1}]$ with $j\geq i+1$, in contrast to~$t'\in\ot_{n+1}^-(X)$. For $s=i\star[s_0,\dots,s_{k-1}]$ and $t=j\star[t_0,\dots,t_{m-1}]$ the claim is readily deduced from the induction hypothesis (due to $i\geq j\Leftrightarrow i+1\geq j+1$). To complete the proof we verify the naturality property
\begin{equation*}
\pi^n_Y\circ(\ot_n\circ\ot_{n+1}^-)(f)(t)=\ot_{n+1}(f)\circ\pi^n_X(t),
\end{equation*}
arguing by induction on $t\in\ot_n\circ\ot_{n+1}^-(X)$. For $t=\overline s$ we compute
\begin{multline*}
\pi^n_Y\circ(\ot_n\circ\ot_{n+1}^-)(f)(t)=\pi^n_Y\circ\ot_n(\ot_{n+1}^-(f))(\overline s)=\pi^n_Y(\overline{\ot_{n+1}^-(f)(s)})=\\
=\ot_{n+1}^-(f)(s)=\ot_{n+1}(f)(s)=\ot_{n+1}(f)\circ\pi^n_X(t).
\end{multline*}
The induction step for $t=j\star[t_0,\dots,t_{m-1}]$ is straightforward.
\end{proof}

The following lemma will be needed below. Intuitively, the equivalence says that a tree with root label $0$ can be embedded into another tree if, and only if, it can be embedded into a subtree with root label~$0$. This is true because the gap condition below a node with label~$0$ is automatic.

\begin{lemma}\label{lem:zero-subtrees}
We have
 \begin{align*}
s\leq_{\ot_{n+1}(X)}\pi^n_X(t)\quad&\Leftrightarrow\quad s\leqf_{\ot_{n+1}^-(X)}\supp^{\ot_n}_{\ot_{n+1}^-(X)}(t)
\end{align*}
for all $s\in\ot_{n+1}^-(X)$ and all $t\in\ot_n\circ\ot_{n+1}^-(X)$.
\end{lemma}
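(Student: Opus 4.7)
The plan is to prove both directions simultaneously by induction on the height $h^n_{\ot_{n+1}^-(X)}(t)$. The key observation is that every $s\in\ot_{n+1}^-(X)$ has the form $\overline x$ or $0\star\sigma$, whereas $\pi^n_X$ shifts every interior label upward by one; so whenever $t$ has the composite form $i\star[t_0,\dots,t_{m-1}]$, the root of $\pi^n_X(t)$ carries label $i+1\geq 1$ and cannot match the root of $s$.

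In the base case $t=\overline u$ with $u\in\ot_{n+1}^-(X)$, one has $\pi^n_X(t)=u$ and $\supp^{\ot_n}_{\ot_{n+1}^-(X)}(t)=\{u\}$, so both sides of the equivalence reduce to $s\leq_{\ot_{n+1}^-(X)}u$. For the inductive step $t=i\star[t_0,\dots,t_{m-1}]$, Definition~\ref{def:gap-order} applied to
\begin{equation*}
s\leq_{\ot_{n+1}(X)}(i+1)\star[\pi^n_X(t_0),\dots,\pi^n_X(t_{m-1})]
\end{equation*}
reduces this inequality to $s\leq_{\ot_{n+1}(X)}\pi^n_X(t_l)$ for some $l<m$: when $s=\overline x$ this is the only applicable clause, since $\pi^n_X(t)$ is not of the form $\overline y$; when $s=0\star\sigma$ the competing ``same root label'' clause of Definition~\ref{def:gap-order} would force $0=i+1$, which is impossible. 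Applying the induction hypothesis to each $\pi^n_X(t_l)$ and using the recursion $\supp^{\ot_n}_{\ot_{n+1}^-(X)}(t)=\bigcup_{l<m}\supp^{\ot_n}_{\ot_{n+1}^-(X)}(t_l)$ then delivers the desired equivalence.

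The delicate point is the label-arithmetic step that rules out the ``root-to-root'' clause of Definition~\ref{def:gap-order}; both features of $\ot_{n+1}^-(X)$---namely that its elements are constants $\overline x$ or have root label exactly $0$---are needed, and this is precisely where the shift $i\mapsto i+1$ built into $\pi^n_X$ earns its keep. Beyond this observation, the argument is a routine structural induction that formalises the intuition that a tree whose root label is $0$ (or which is a variable) can embed into $\pi^n_X(t)$ only by landing entirely inside a subtree whose root lies in $\{0\}\cup X$, i.e.\ inside one of the unravelled leaf labels of $t$.
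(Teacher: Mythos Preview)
Your proposal is correct and follows essentially the same approach as the paper: a structural induction on~$t$, with the base case $t=\overline u$ handled directly and the inductive step reduced via the observation that $s\leq_{\ot_{n+1}(X)}\pi^n_X(t)$ is equivalent to $s\leq_{\ot_{n+1}(X)}\pi^n_X(t_l)$ for some~$l<m$. The paper obtains this reduction by citing the corresponding step from the proof of Proposition~\ref{prop:pi-iso}, whereas you re-derive it from Definition~\ref{def:gap-order} and the label shift $i\mapsto i+1$; the content is the same.
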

\begin{proof}
We establish the claim by induction on~$t$. For $t=\overline{t'}$ it suffices to observe $\pi^n_X(t)=t'$ and $\supp^{\ot_n}_{\ot_{n+1}^-(X)}(t)=\{t'\}$. To prove the claim for $t=j\star[t_0,\dots,t_{m-1}]$, we recall a step from the previous proof: For $s\in\ot_{n+1}^-$ we have observed
\begin{equation*}
s\leq_{\ot_{n+1}(X)}\pi^n_X(t)\quad\Leftrightarrow\quad s\leq_{\ot_{n+1}(X)}\pi^n_X(t_l)\text{ for some $l<m$}.
\end{equation*}
Together with
\begin{equation*}
\supp^{\ot_n}_{\ot_{n+1}^-(X)}(t)=\bigcup\{\supp^{\ot_n}_{\ot_{n+1}^-(X)}(t_l)\,|\,l<m\},
\end{equation*}
this reduces the claim to the induction hypothesis. 
\end{proof}

To complete the reconstruction of the gap condition, it remains to realize step~(2) from the introduction. For this purpose we show that $\ot_{n+1}^-$ is a Kruskal derivative of $M\circ\ot_n$, where $M$ is the finite multiset dilator from the beginning of Section~\ref{sect:gap-orders-dilators}. In view of Definition~\ref{def:kruskal-deriv}, we introduce the following objects:

\begin{definition}
For any partial order~$X$ we define a function $\iota^n_X:X\to\ot_{n+1}^-(X)$ by setting $\iota^n_X(x)=\overline x$. To define $\kappa^n_X:M\circ\ot_n\circ\ot_{n+1}^-(X)\to\ot_{n+1}^-(X)$ we stipulate
\begin{equation*}
\kappa^n_X([s_0,\dots,s_{m-1}])=0\star[\pi^n_X(s_0),\dots,\pi^n_X(s_{m-1})],
\end{equation*}
for $s_0,\dots,s_{m-1}\in\ot_n\circ\ot_{n+1}^-(X)$. We will write $\iota^n$ and $\kappa^n$ for the families of functions $\iota^n_X$ and $\kappa^n_X$ that are indexed by the partial order~$X$.
\end{definition}

Let us now prove the central result of our reconstruction:

\begin{theorem}
For any number $n\in\mathbb N$, the tuple $(\ot_{n+1}^-,\iota^n,\kappa^n)$ is a Kruskal derivative of the normal PO-dilator~$M\circ\ot_n$. 
\end{theorem}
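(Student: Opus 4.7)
The plan is to verify the two conditions of Definition~\ref{def:kruskal-deriv}: for each partial order~$X$, initiality of $(\ot_{n+1}^-(X),\iota^n_X,\kappa^n_X)$ as a Kruskal fixed point of $M\circ\ot_n$ over~$X$, and the naturality equations for~$\iota^n$ and~$\kappa^n$. For initiality I intend to invoke Theorem~\ref{thm:initial-condition}, so that in addition to the four clauses of Definition~\ref{def:Kruskal-fixed-point} it suffices to exhibit: (a) the joint surjectivity $\rng(\iota^n_X)\cup\rng(\kappa^n_X)=\ot_{n+1}^-(X)$, (b) the implication $x\leq_X y\Rightarrow\iota^n_X(x)\leq_{\ot_{n+1}^-(X)}\iota^n_X(y)$, and (c) a height function satisfying the required support inequality.

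The structural properties come first. Joint surjectivity follows from the definition of $\ot_{n+1}^-(X)$ together with the surjectivity of~$\pi^n_X$ established in Proposition~\ref{prop:pi-iso}: any element $0\star[u_0,\dots,u_{m-1}]\in\ot_{n+1}^-(X)$ can be rewritten as $\kappa^n_X([s_0,\dots,s_{m-1}])$ for any choice of preimages $\pi^n_X(s_l)=u_l$. Disjointness of the ranges, the equivalence $\iota^n_X(x)\leq\iota^n_X(y)\Leftrightarrow x\leq_X y$, and the impossibility of $\kappa^n_X(\sigma)\leq\iota^n_X(y)$ are all immediate from Definition~\ref{def:gap-order} together with the observation that $0\star[\dots]$ and $\overline y$ never coincide. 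This simultaneously discharges~(b).

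The main obstacle is decoding the two nontrivial Kruskal fixed point equivalences through~$\pi^n_X$ and Lemma~\ref{lem:zero-subtrees}. For $\tau=[t_0,\dots,t_{m-1}]$, the defining clauses of $\leq_{\ot_{n+1}(X)}$ give $\iota^n_X(x)\leq\kappa^n_X(\tau)$ iff $\overline x\leq_{\ot_{n+1}(X)}\pi^n_X(t_l)$ for some~$l$; by Lemma~\ref{lem:zero-subtrees} this translates to $\overline x\leqf_{\ot_{n+1}^-(X)}\supp^{\ot_n}_{\ot_{n+1}^-(X)}(t_l)$ for some~$l$, which is exactly $\iota^n_X(x)\leqf\supp^{M\circ\ot_n}_{\ot_{n+1}^-(X)}(\tau)$ by the formula for composite supports. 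For $\sigma=[s_0,\dots,s_{k-1}]$ and $\tau=[t_0,\dots,t_{m-1}]$, Definition~\ref{def:gap-order} splits $\kappa^n_X(\sigma)\leq\kappa^n_X(\tau)$ into a disjunction: the multiset clause, combined with the fact that $\pi^n_X$ is an order isomorphism (Proposition~\ref{prop:pi-iso}), is equivalent to $\sigma\leq_{M(\ot_n(\ot_{n+1}^-(X)))}\tau$; the subtree clause, via Lemma~\ref{lem:zero-subtrees} applied to the element $\kappa^n_X(\sigma)\in\ot_{n+1}^-(X)$, unfolds to $\kappa^n_X(\sigma)\leqf_{\ot_{n+1}^-(X)}\supp^{M\circ\ot_n}_{\ot_{n+1}^-(X)}(\tau)$.

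For the height condition I would take $h(s)=h^{n+1}_X(s)$ restricted to $\ot_{n+1}^-(X)$; the required inequality $h(s)<h(\kappa^n_X(\sigma))$ for $s\in\supp^{M\circ\ot_n}_{\ot_{n+1}^-(X)}(\sigma)$ reduces, by the recursive definition of $\pi^n_X$ and of the height function, to an auxiliary lemma proved by induction on $t\in\ot_n(\ot_{n+1}^-(X))$: every element of $\supp^{\ot_n}_{\ot_{n+1}^-(X)}(t)$ has $\ot_{n+1}$-height at most $h^{n+1}_X(\pi^n_X(t))$. Finally, the naturality equations $\iota^n_Y\circ f=\ot_{n+1}^-(f)\circ\iota^n_X$ and $\ot_{n+1}^-(f)\circ\kappa^n_X=\kappa^n_Y\circ(M\circ\ot_n)(\ot_{n+1}^-(f))$ are routine calculations from the definitions together with the naturality of~$\pi^n$ already established in Proposition~\ref{prop:pi-iso}.
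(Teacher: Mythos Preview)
Your proposal is correct and follows essentially the same route as the paper: verify the fixed-point clauses via Definition~\ref{def:gap-order}, Lemma~\ref{lem:zero-subtrees}, and the fact that $\pi^n_X$ is an order isomorphism, then use $h^{n+1}_X$ for the height condition of Theorem~\ref{thm:initial-condition} and the naturality of $\pi^n$ for condition~(ii) of Definition~\ref{def:kruskal-deriv}. The one small difference is that the paper avoids your separate auxiliary induction for the height bound by reusing Lemma~\ref{lem:zero-subtrees} once more: from $s\in\supp^{\ot_n}_{\ot_{n+1}^-(X)}(t_l)$ one gets $s\leq_{\ot_{n+1}(X)}\pi^n_X(t_l)$ immediately, and then $h^{n+1}_X(s)\leq h^{n+1}_X(\pi^n_X(t_l))<h^{n+1}_X(\kappa^n_X(\tau))$ by the monotonicity of height noted after Definition~\ref{def:gap-order}.
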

\begin{proof}
From Proposition~\ref{prop:gap-dilators} we know that $\ot_{n+1}^-$ is a normal PO-dilator. It remains to verify conditions~(i) and~(ii) from Definition~\ref{def:kruskal-deriv}. Let us begin by showing that $(\ot_{n+1}^-(X),\iota^n_X,\kappa^n_X)$ is an initial Kruskal fixed point of $M\circ\ot_n$ over $X$, for each partial order~$X$. Invoking the fact that $\pi^n_X:\ot_n\circ\ot_{n+1}^-(X)\to\ot_{n+1}(X)$ is surjective, we see that $\ot_{n+1}^-(X)$ is the disjoint union of $\rng(\iota^n_X)$ and $\rng(\kappa^n_X)$, as required for Definition~\ref{def:Kruskal-fixed-point} and Theorem~\ref{thm:initial-condition}. In view of Definition~\ref{def:gap-order} we also have
\begin{gather*}
\iota^n_X(x)=\overline x\leq_{\ot_{n+1}^-(X)}\overline y=\iota^n_X(y)\quad\Leftrightarrow\quad x\leq_X y,\\
\kappa^n_X([s_0,\dots,s_{k-1}])=0\star[\pi^n_X(s_0),\dots,\pi^n_X(s_{k-1})]\not\leq_{\ot_{n+1}^-(X)}\overline y=\iota^n_X(y).
\end{gather*}
To verify the remaining conditions from Definition~\ref{def:Kruskal-fixed-point}, we observe that the support of an element $\tau=[t_0,\dots,t_{m-1}]\in M\circ\ot_n\circ\ot_{n+1}^-(X)$ is given by
\begin{multline*}
\supp^{M\circ\ot_n}_{\ot_{n+1}^-(X)}(\tau)=\bigcup\{\supp^{\ot_n}_{\ot_{n+1}^-(X)}(t)\,|\,t\in\supp^M_{\ot_n\circ\ot_{n+1}^-(X)}(\tau)\}=\\
=\bigcup\{\supp^{\ot_n}_{\ot_{n+1}^-(X)}(t_l)\,|\,l<m\}.
\end{multline*}
For $s\in\ot_{n+1}^-(X)$ we can thus invoke Lemma~\ref{lem:zero-subtrees} to get
\begin{equation*}
s\leqf_{\ot_{n+1}^-(X)}\supp^{M\circ\ot_n}_{\ot_{n+1}^-(X)}(\tau)\quad\Leftrightarrow\quad s\leq_{\ot_{n+1}(X)}\pi^n_X(t_l)\text{ for some $l<m$}.
\end{equation*}
Writing $\sigma=[s_0,\dots,s_{k-1}]$ and $\tau=[t_0,\dots,t_{m-1}]$, we now see that the second condition from Definition~\ref{def:Kruskal-fixed-point} requires that
\begin{equation*}
\iota^n_X(x)=\overline x\leq_{\ot_{n+1}^-(X)}0\star[\pi^n_X(t_0),\dots,\pi^n_X(t_{m-1})]=\kappa^n_X(\tau)
\end{equation*}
holds if, and only if, we have $\overline x\leq_{\ot_{n+1}^-(X)}\pi^n_X(t_l)$ for some $l<m$. This is true according to Definition~\ref{def:gap-order}. The last condition from Definition~\ref{def:Kruskal-fixed-point} requires that
\begin{equation*}
\kappa^n_X(\sigma)=0\star[\pi^n_X(s_0),\dots,\pi^n_X(s_{k-1})]\leq_{\ot_{n+1}^-(X)}0\star[\pi^n_X(t_0),\dots,\pi^n_X(t_{m-1})]=\kappa^n_X(\tau)
\end{equation*}
is equivalent to the disjunction
\begin{equation*}
\sigma\leq_{M\circ\ot_n\circ\ot_{n+1}^-(X)}\tau\quad\text{or}\quad\kappa^n_X(\sigma)\leq_{\ot_{n+1}(X)}\pi^n_X(t_l)\text{ for some $l<m$}.
\end{equation*}
To reduce this to Definition~\ref{def:gap-order} it suffices to note that we have
\begin{equation*}
 \sigma\leq_{M\circ\ot_n\circ\ot_{n+1}^-(X)}\tau\,\Leftrightarrow\,[\pi^n_X(s_0),\dots,\pi^n_X(s_{k-1})]\leq_{M\circ\ot_{n+1}(X)}[\pi^n_X(t_0),\dots,\pi^n_X(t_{m-1})],
\end{equation*}
 since $\pi^n_X$ is an embedding. Now recall the function $h_X^{n+1}:\ot_{n+1}(X)\to\mathbb N$ that was specified before the statement of Defintion~\ref{def:gap-order} above. We will also write $h_X^{n+1}$ for the restriction of this function to $\ot_{n+1}^-(X)\subseteq\ot_{n+1}(X)$. In order to apply Theorem~\ref{thm:initial-condition}, we need to establish
 \begin{equation*}
  s\in\supp^{M\circ\ot_n}_{\ot_{n+1}^-(X)}(\tau)\quad\Rightarrow\quad h_X^{n+1}(s)<h_X^{n+1}(\kappa_X^n(\tau))
 \end{equation*}
 for $s\in\ot_{n+1}^-(X)$ and $\tau\in M\circ\ot_n\circ\ot_{n+1}^-(X)$. So assume we have $s\in\supp^{M\circ\ot_n}_{\ot_{n+1}^-(X)}(\tau)$ with $\tau=[t_0,\dots,t_{m-1}]$. By the above we get $s\in\supp^{\ot_n}_{\ot_{n+1}^-(X)}(t_l)$ for some~$l<m$. Then Lemma~\ref{lem:zero-subtrees} yields $s\leq_{\ot_{n+1}(X)}\pi_X^n(t_l)$. As observed after Definition~\ref{def:gap-order}, this implies $h_X^{n+1}(s)\leq h_X^{n+1}(\pi_X^n(t_l))$ and hence
 \begin{equation*}
  h_X^{n+1}(s)<h_X^{n+1}(0\star[\pi^n_X(t_0),\dots,\pi^n_X(t_{m-1})])=h^{n+1}_X(\kappa^n_X(\tau)).
 \end{equation*}
 We have now verified all conditions from Definition~\ref{def:Kruskal-fixed-point} and Theorem~\ref{thm:initial-condition}, which shows that $(\ot_{n+1}^-(X),\iota^n_X,\kappa^n_X)$ is an initial Kruskal fixed point of $M\circ\ot_n$ over~$X$. To conclude that $(\ot_{n+1}^-,\iota^n,\kappa^n)$ is a Kruskal derivative of $M\circ\ot_n$, it remains to establish condition~(ii) from Definition~\ref{def:kruskal-deriv}. Given a quasi embedding~$f:X\to Y$, we first compute
 \begin{equation*}
  \iota^n_Y\circ f(x)=\overline{f(x)}=\ot_{n+1}(f)(\overline x)=\ot_{n+1}^-(f)\circ\iota^n_X(x).
 \end{equation*}
 For $\tau=[t_0,\dots,t_{m-1}]\in M\circ\ot_n\circ\ot_{n+1}^-(X)$ we also get
 \begin{multline*}
  \kappa^n_Y\circ(M\circ\ot_n)(\ot_{n+1}^-(f))(\tau)=\kappa^n_Y([(\ot_n\circ\ot_{n+1}^-)(f)(t_0),\dots,(\ot_n\circ\ot_{n+1}^-)(f)(t_{m-1})])=\\
  \begin{aligned}
  &=0\star[\pi^n_Y\circ(\ot_n\circ\ot_{n+1}^-)(f)(t_0),\dots,\pi^n_Y\circ(\ot_n\circ\ot_{n+1}^-)(f)(t_{m-1})]=\\
  &=0\star[\ot_{n+1}(f)\circ\pi^n_X(t_0),\dots,\ot_{n+1}(f)\circ\pi^n_X(t_{m-1})]=\\
  &=\ot_{n+1}(f)(0\star[\pi^n_X(t_0),\dots,\pi^n_X(t_{m-1})])=\ot_{n+1}^-(f)\circ\kappa^n_X(\tau),
  \end{aligned}
 \end{multline*}
 just as required by Definition~\ref{def:kruskal-deriv}.
 \end{proof}
 
 As mentioned in the introduction, we can draw the following conclusion. In view of Proposition~\ref{prop:gap-orders}, the corollary implies Friedman's result that the gap condition induces a well partial order on the set of finite trees with labels from~$\{0,\dots,n-1\}$.
 
 \begin{corollary}\label{cor:T_n-wpo}
  The normal PO-dilators $\ot_n$ and $\ot_{n+1}^-$ preserve well partial orders (which means that they are normal WPO-dilators), for each number~$n\in\mathbb N$.
 \end{corollary}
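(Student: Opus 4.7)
The plan is to prove the corollary by induction on $n\in\mathbb N$, with both statements (that $\ot_n$ is a normal WPO-dilator, and that $\ot_{n+1}^-$ is a normal WPO-dilator) proved together. The base case $n=0$ is essentially trivial: the functor $\ot_0$ is equivalent to the identity functor on the category of partial orders (as noted at the start of the section), so it is a normal WPO-dilator by inspection.

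For the inductive step, assume that $\ot_n$ is a normal WPO-dilator. I would first observe that $M$ is a normal WPO-dilator, as verified at the beginning of Section~\ref{sect:gap-orders-dilators}. Since composition preserves the WPO-dilator property as well as normality (this was stated in the discussion preceding Definition of $\pi^n_X$), it follows that $M\circ\ot_n$ is a normal WPO-dilator. The theorem just proved shows that $(\ot_{n+1}^-,\iota^n,\kappa^n)$ is a Kruskal derivative of $M\circ\ot_n$. By Theorem~\ref{thm:derivs-unique}, $\ot_{n+1}^-$ is naturally isomorphic to \emph{the} Kruskal derivative $\T(M\circ\ot_n)$ supplied by Theorem~\ref{thm:deriv-exists}. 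Corollary~\ref{cor:derivative-wpo} guarantees that the latter is a normal WPO-dilator, and being a WPO-dilator is plainly invariant under natural isomorphism of functors (one transports bad sequences along an isomorphism). Hence $\ot_{n+1}^-$ is a normal WPO-dilator.

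It then remains to transfer the conclusion from $\ot_{n+1}^-$ to $\ot_{n+1}$. Here I would invoke Proposition~\ref{prop:pi-iso}, which provides a natural isomorphism $\pi^n:\ot_n\circ\ot_{n+1}^-\Rightarrow\ot_{n+1}$. Since $\ot_n$ and $\ot_{n+1}^-$ are both normal WPO-dilators by the induction hypothesis and the preceding step, their composition $\ot_n\circ\ot_{n+1}^-$ is a normal WPO-dilator; transporting along $\pi^n$ yields the same for $\ot_{n+1}$, closing the induction.

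No step looks genuinely hard: the corollary really is just a matter of assembling the pieces that the paper has already established. The only thing to be slightly careful about is the bookkeeping around ``the'' Kruskal derivative versus the explicitly constructed $\ot_{n+1}^-$, which is handled by the uniqueness-up-to-isomorphism result of Theorem~\ref{thm:derivs-unique} together with the stability of the WPO-dilator property under natural isomorphism. In particular, no new combinatorial argument is required at this stage; everything combinatorial has been absorbed into the construction of Kruskal derivatives and into the verification that $M$ is a WPO-dilator (i.e.~Higman's lemma for multisets).
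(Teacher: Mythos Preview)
Your proposal is correct and follows essentially the same inductive argument as the paper's proof. The only difference is expository: the paper applies Corollary~\ref{cor:derivative-wpo} directly to $\ot_{n+1}^-$ (relying on the convention, stated just before that corollary, that ``the'' Kruskal derivative is well-defined up to natural isomorphism), whereas you make the passage through Theorem~\ref{thm:derivs-unique} explicit.
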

 \begin{proof}
  We argue by induction on~$n$. Due to $\ot_0(X)\cong X$ it is clear that $\ot_0$ preserves well partial orders. If $\ot_n$ is a normal WPO-dilator, then so is $M\circ\ot_n$. Since $\ot_{n+1}^-$ is the Kruskal derivative of $M\circ\ot_n$, Corollary~\ref{cor:derivative-wpo} implies that it is also a normal WPO-dilator. In view of Proposition~\ref{prop:pi-iso}, the same holds for $\ot_{n+1}\cong\ot_n\circ\ot_{n+1}^-$.
 \end{proof}

\bibliographystyle{amsplain}
\bibliography{Iterated_collapsing}

\end{document}